\documentclass[12 pt, psamsfonts]{amsart}
\usepackage{amsmath}
\usepackage{amsthm}
\usepackage{amsfonts}
\usepackage{amssymb}
\usepackage{float}
\usepackage{pgf,tikz}
\usepackage{tikz-cd}
\usepackage{mathrsfs}
\usetikzlibrary{arrows}
\usepackage{eucal}
\usepackage{graphicx}
\usepackage{multirow}
\usepackage[all,knot]{xy}
\xyoption{arc}

\DeclareMathOperator{\Aut}{Aut}
\newcommand{\cH}{\mathcal{H}}
\newcommand{\D}{\mathcal{D}}
\newcommand{\CC}{\mathcal{C}}

\newcommand{\End}{{\rm End}}
\newcommand{\Rep}{{\rm Rep}}

\newcommand{\GL}{{\rm GL}}
\newcommand{\SL}{{\rm SL}}
\newcommand{\im}{\mathrm{i}}
\newcommand{\ot}{\otimes}
\newcommand{\B}{\mathcal{B}}

\newcommand{\A}{\mathcal{A}}

\newcommand{\om}{\omega}

\newcommand{\Hom}{{\rm Hom}}

\newcommand{\T}{\mathcal{T}}

\newcommand{\F}{\mathbb{F}}

\newcommand{\Z}{\mathbb{Z}}

\newcommand{\Q}{\mathbb{Q}}
\newcommand{\C}{\mathbb{C}}
\newcommand{\R}{\mathbb{R}}

\newcommand{\FPdim}{{\rm FPdim}}

\newcommand{\ppg}[1]{{\textcolor{red}{#1}}}
\newcommand{\er}[1]{{\textcolor{blue}{#1}}}


\newcommand{\ignore}[1]{}  
\newcommand{\scrA}{\mathscr{A}}
\newcommand{\legendre}[2]{\genfrac{(}{)}{}{}{#1}{#2}}

\newtheorem{theorem}{Theorem}[section]
\newtheorem*{theorem*}{Theorem}

\newtheorem{prop}[theorem]{Proposition}
\theoremstyle{definition}

\newtheorem{example}[theorem]{Example}
\newtheorem{principle}[theorem]{Principle}
\newtheorem{defn}[theorem]{Definition}

\theoremstyle{remark}

\newtheorem{problem}[theorem]{Problem}
\numberwithin{equation}{section}




\headheight=5pt \headsep=18pt
\footskip=18pt
\textheight=47pc \topskip=10pt
\textwidth=37pc
\calclayout

\title[$\B_n$ representations from twisted tensor products]{Braid group representations from twisted tensor products of algebras}
\author{Paul Gustafson$^1$, Andrew Kimball$^2$, Eric C. Rowell$^2$, Qing Zhang$^2$}
\date{\today}

\address{$^1$Department of Electrical Engineering\\
Wright State University \\
Dayton, OH 45435\\
U.S.A.}
\email{paul.gustafson@wright.edu}
\address{$^2$Department of Mathematics\\
    Texas A\&M University\\
    College Station, TX 77843-3368\\
    U.S.A.}
\email{amkimball1@math.tamu.edu, rowell@math.tamu.edu, zhangqing@math.tamu.edu}

\thanks{The authors gratefully acknowledge support under USA NSF grant DMS-1664359.  We also thank C. Galindo, JM Landsberg, Z. Wang and S. Witherspoon for valuable insight. ER is also partially supported by a Texas A\&M Presidential Impact Fellowship.}

\begin{document}
\begin{abstract}
We unify and generalize several approaches to constructing braid group representations from finite groups, using iterated twisted tensor products. Our results hint at a relationship between the braiding on the $G$-gaugings of a pointed modular category $\CC(A,Q)$ and that of $\CC(A,Q)$ itself.
\end{abstract}
\maketitle

\section{Introduction}

Braid group representations are plentiful: for example,
from any object $X$ in a braided fusion category $\CC$ one obtains a sequence of braid group representations $\rho^X:\B_n\rightarrow \End(X^{\ot n})$.
Braided vector spaces $(R,V)$ \cite{AS} (i.e., matrix solutions to the Yang-Baxter equation) are also a very rich source, as are families of finite dimensional quotients of the braid group algebra $\F(\B_n)$, such as
Hecke algebras, Temperley-Lieb algebras \cite{jones86} and BMW-algebras \cite{BW,M}.  That the braided fusion category construction essentially supersedes these sources is well-known, but explicit matrices for the
generators are not easy to come by--one typically needs the associativity
constants ($6j$-symbols or $F$-matrices) in addition to the $R$-symbols, 
and these are only available for a few families of categories.  Generally,
the irreducible representations of $\B_n$ are only classified for dimensions
at most $n$ \cite{formaneketal} and for $\B_3$ for dimensions up to $5$ \cite{tubawenzl1}.  Braided vector spaces $(R,V)$ are only classified for $\dim(V)=2$ \cite{Hiet}.  

In this article we outline an approach to finding families of braid group representations from twisted tensor products of algebras. 
We motivate our approach with the following two ``proof of principle" examples.

In \cite{GoldschmidtJones} the following representations of the braid group $\B_n$ were described: set $q=e^{2\pi i/m}$ for $m$ odd and let $\A_n(\Z_m)$ be the $\C$-algebra generated by $u_i$ for $1\leq i\leq n-1$ satisfying:
\begin{itemize}
    \item $u_i^m=1$
    \item $u_iu_{i+1}=q^{2}u_{i+1}u_i$
    \item $u_iu_j=u_ju_i$ for $|i-j|\neq 1$
\end{itemize}
The algebra $\A_n(\Z_m)$ is denoted by $ES(m,n-1)$ in \cite{Jones89} and by $T_n^m(q)$ in \cite{RWQT}.  Our perspective is to regard $\A_n(\Z_m)$ as an iterated twisted tensor product of the group algebra $\C[\Z_m]$: 
$$\A_n(\Z_m)=\C[\Z_m]\ot_\vartheta\C[\Z_m]\ot_\vartheta\cdots\ot_\vartheta \C[\Z_m],$$ where $\vartheta$ is the twisting map corresponding to the second relation above.  Defining $$\rho_n(\sigma_i)=R_i:=\frac{1}{\sqrt{m}}\sum_{j=0}^m q^{j^2}u_i^j$$  we obtain a representation $\B_n\rightarrow \A_n(\Z_m)$.  These representations are known to have finite image, see \cite{GR,GoldschmidtJones}.
 Moreover, we may obtain a matrix representation by defining $U\in\End(\C^m\otimes\C^m)$ by $U(\mathbf{e}_i\otimes \mathbf{e}_j)=q^{j-i}\mathbf{e}_{i+1}\ot \mathbf{e}_{j+1}$ and assigning $$u_i\mapsto Id^{\ot i}\ot U\ot Id^{\ot n-i-1}.$$ From this representation one obtains a braided vector space as $R:=\rho_2(\sigma_1)$ on $V=\C^m$.

Another example of braid group representations related to twisted tensor products of algebras is found in \cite{RQT} (due to Jones) where the quaternion group $Q_8$ appears.  For $1\leq i\leq n-1$ let $\A_n(Q_8)$ be the algebra generated by $u_i,v_i$ satisfying:
\begin{enumerate}
\item $u_i^2=v_i^2=-1$ for all $i$,
\item $[u_i,v_j]=-1$ if $|i-j|<2$,
\item $[u_i,u_j]=[v_i,v_j]=1$,
\item $[u_i,v_j]=1$ if $|i-j|\geq2$.

\end{enumerate}

Although $\A_n(Q_8)$ is not, strictly speaking, a twisted tensor product of group algebras we nonetheless obtain braid group representations via
$$\sigma_i\mapsto (1+u_i+v_i+u_iv_i).$$

In this article we initiate the general problem of finding braid group representations in twisted tensor products of (group) algebras, unifying the two examples just outlined.  

Our study is not just motivated by idle curiosity.  
In the last section we explore some relationships between these twisted tensor products of group algebras and $G$-gaugings of pointed modular categories, laying the groundwork towards understanding braid group representations associated with weakly group theoretical modular categories and the property $F$ conjecture.  This conjecture \cite{NR} predicts that the braid group images obtained from any weakly integral braided fusion category $\CC$ have finite image, i.e., $\CC$ has property $F$.  By taking Drinfeld centers one may reduce this conjecture to the case where $\CC$ is a modular category.  The property $F$ conjecture has been verified for many classes of braided fusion categories: for example, group-theoretical categories \cite{ERW}, quantum group categories \cite{jones86,FLW,LR,RQT,RWQT}, and certain metaplectic categories \cite{GRRTohoku}.

The paper is organized as follows: in section \ref{prelim} we set down the general framework for our problem, which is explicitly described and analyzed in section \ref{main problem}.  We carry out several case studies for both abelian and non-abelian cases in section \ref{case studies} while the connections to categories obtained by gauging symmetries of pointed modular categories are speculated upon in section \ref{categorical connections}, followed by a short section of conclusions.  An appendix contains some MAGMA code for some explicit examples.

\section{Preliminaries}\label{prelim}
We first describe the general algebraic ingredients for the problem we are interested in.

\subsection{Twisted Tensor Products}
The treatment of twisted tensor products in \cite{cap} is most suitable for our purposes:
Let $A$ and $B$ be $\F$-algebras with multiplication maps $\mu_A,\mu_B$ respectively, and a map $\vartheta:B\otimes A\rightarrow A\otimes B$, i.e. $\vartheta$ is $\F$-linear map with $\vartheta(b\ot 1)=(1\ot b)$ and $\vartheta(1\ot a)=(a\ot 1)$.  The map $\mu_\vartheta:  A\ot B\rightarrow A\ot B$ defined by $$\mu_\vartheta=(\mu_A\ot\mu_B)\circ(Id_A\ot\vartheta\ot Id_B)$$ defines an associative multiplication if and only if:
\begin{equation}\label{tautwist}
    \vartheta\circ (\mu_B\ot \mu_A)=\mu_\vartheta\circ(\vartheta\ot\vartheta)\circ (Id_B\ot\vartheta\ot Id_A).
\end{equation} The corresponding algebra, denoted $A\ot_\vartheta B$ will be called a \textbf{twisted tensor product} of $A$ and $B$, and the map $\vartheta$ will be called a unital twisting map.  We shall be most interested in the case where $A=B$.

To iterate this process, we rely on results of \cite{iterated}.
Given 3 algebras $A,B$ and $C$ and unital twisting maps $\vartheta_1:B\ot A\rightarrow A\ot B$, $\vartheta_2:C\ot B\rightarrow B\ot C$ and $\vartheta_3:C\ot A\rightarrow A\ot C$ each of which satisfy eqn. (\ref{tautwist}) one can define two maps $T_1=(Id_A\ot \vartheta_2)\circ (\vartheta_3\ot Id_B)$ on $C\ot(A\ot_{\vartheta_1}B)$ and $T_2=(\vartheta_1\ot Id_C)\circ(Id_B\ot \vartheta_3)$ on $(B\ot_{\vartheta_2}C)\ot A$ which are potentially unital twisting maps.  \cite[Theorem 2.1]{iterated} show that these are both unital twisting maps if and only if the compatibility condition:
\begin{equation}\label{iteratedcompat}
    (Id_A \ot \vartheta_2)  (\vartheta_3 \ot Id_B) (Id_C \ot \vartheta_1)=(\vartheta_1 \ot Id_C ) (Id_B\ot \vartheta_3)  (\vartheta_2 \ot Id_A)
\end{equation} is satisfied. Moreover, the two \textbf{iterated twisted tensor products} $(A\ot_{\vartheta_1} B)\ot_{T_1}C$ and $A\ot_{T_2}(B\ot_{\vartheta_2}C)$ constructed from these twisting maps are isomorphic algebras.  One may inductively define twisted tensor products for any number of algebras $A_i$ provided the analogous compatibility conditions are satisfied.  Again, we will be especially interested in the case where $A=A_i=A_j$, and $\vartheta=\vartheta_{i,i+1}:A_{i+1}\ot A_i\rightarrow A_{i}\ot A_{i+1}$ for adjacent copies of $A$ and $\sigma=\vartheta_{i,j}:A_j\ot A_i\rightarrow A_i\ot A_j$ for $|i-j|>1$ is the usual flip map $\sigma(a\ot b)=b\ot a$.  In fact, for all of our examples we will have $\vartheta(a\ot b)=\tau(a,b)b\ot a$ for some function $\tau:A\ot A\rightarrow \F$.  One then easily sees that \ref{iteratedcompat} is satisfied:
$$(Id \ot \vartheta)(\sigma \ot Id) (Id \ot \vartheta)(a\ot b\ot c) \quad\text{and}\quad(\vartheta \ot Id) (Id\ot \sigma)  (\vartheta \ot Id) (a\ot b\ot c)$$ 
are both equal to $\tau(a,b)\tau(b,c)(c\ot b\ot a)$.  Moreover, condition $(\ref{tautwist})$ and unitality are equivalent to $\tau:A\ot A\rightarrow \F$ being a bihomomorphism of $\F$-algebras: $$\tau(a_1a_2,b_1b_2)=\tau(a_2,b_1)\tau(a_1,b_1)\tau(a_2,b_1)\tau(a_2,b_2),$$  and unitality implies $\tau(1,a)=\tau(a,1)=1$, while bilinearity is immediate.

\subsection{Braid group representations and property $F$}
Our goal is to study families of representations of the braid group $\B_n$.
In particular we are interested in representations that are related in the following way:
\begin{defn}\label{seq}
 An indexed family of complex $\B_n$-representations $(\rho_n,V_n)$ is a \emph{sequence of braid representations} if there exist injective algebra homomorphisms $\iota_n:\C\rho_n(\B_n)\rightarrow \C\rho_{n+1}(\B_{n+1})$ such that the following diagram commutes:
$$\xymatrix{\C\B_n\ar[r]\ar@{^{(}->}[d] & \C\rho_n(\B_n)\ar@{^{(}->}[d]^{\iota_n} \\ \C\B_{n+1}\ar[r] & \C\rho_{n+1}(\B_{n+1})}$$
where the left-hand side of the square is induced by the inclusion $\B_n \hookrightarrow B_{n+1}$ given by $\sigma_i \mapsto \sigma_i$.
\end{defn}

Our examples are typically of the following form: let $1\in \scrA_1\subset \scrA_2\subset \cdots\subset \scrA_n\subset\cdots$ be a tower of finite dimensional semisimple algebras, and $\rho_n:\C\B_n\rightarrow \scrA_n$ algebra homomorphisms that respect the inclusions $\scrA_n\subset\scrA_{n+1}$.  Then the canonical faithful representation of $\scrA_n$ provides a sequence of representations.

For example,  we obtain a sequence of $\B_n$-representations from any braided vector space $(R,V)$ i.e., an invertible operator $R\in\Aut(V^{\ot 2})$ that satisfies the Yang-Baxter equation
$$(R\ot I)(I\ot R)(R\ot I)=(I\ot R)(R\ot I)(I\ot R)\in\Aut(V^{\ot 3}).$$ Explicitly we have $\B_n\rightarrow \Aut(V^{\ot n})$ via $\sigma_i\rightarrow Id_V^{\ot i-1}\ot R\ot Id_V^{\ot n-i-1}$.

Other standard examples come from the Temperley-Lieb, Hecke and BMW-algebras mentioned in the Introduction.

Some conjectures on the images of such representations are found in \cite{RW,GHR,GR}. For example it is an open question whether unitary braided vector spaces have virtualy abelian images, but there is strong evidence that this is so.

\section{Twisted Tensor Products of Algebras and Yang-Baxter Operators}\label{main problem}
The problem that we propose to study is the following: 
\begin{problem}
Find and classify braid group representations inside twisted tensor products of (group) algebras, generalizing the well-known Gaussian solutions.
\end{problem}

\subsection{Twisted tensor products of group algebras}

First we describe the twisted tensor products of (group) algebras we will study.
  Fix a finite group $G$ and  $q\in U(1)$.  On the group algebra $\Q(q)[G]$ we would like to find a unital twisting map $\vartheta:\Q(q)[G]\ot \Q(q)[G]\rightarrow\Q(q)[G]\ot \Q(q)[G]$ such that $\vartheta(g_1\ot h_2)=\tau(g,h)h_2\ot g_1$ on basis elements where $\tau(g,h)=q^{\alpha(g,h)}$.  We use subscripts to emphasize that $g_1\in G$ is a basis element of the first factor and $h_2\in G$ is a basis element of the second factor.  If $\vartheta$ is a unital twisting map then $\tau(g,h):G\times G\rightarrow U(1)$ is a bicharacter of $G$.  Since $q^{\alpha(g^k,h)}=q^{k\alpha(g,h)}$ we assume that $q$ is an $m$th root of unity (where $m\mid exp(G)$), and that $\alpha:G\times G\rightarrow \Z_m$ is a bihomomorphism.  Now define $\sigma:\Q(q)[G]\ot \Q(q)[G]\rightarrow\Q(q)[G]\ot \Q(q)[G]$ to be the usual flip map $g_1\ot h_2=h_2\ot g_1$. It is routine to check that (\ref{iteratedcompat}) is satisfied by $\vartheta$ and $\sigma$.

With these verifications we can define a finite dimensional semisimple algebra $\A_n(G,\tau)$ as an iterated twisted tensor product of $\C[G]$ follows: as a $\Q(q)$ vector space $\A_n(G,\tau)=\Q(q)[G]^{\ot n-1}$.  For each $1\leq i\leq n-1$ and $g\in G$ we define elements $g_i=1^{\ot i-1}\ot g\ot 1^{\ot n-i-2}$.  We can then dispense with the $\ot$ symbol altogether, and write monomials as $g^{(i_1)}_1\cdots g^{(i_{n-1})}_{n-1}$ where $g^{(i_j)}\in G$. The multiplication on $\A_n(G,\tau)$ have the following straightening rules on the generators $g_i$: $$g_ih_j=\begin{cases} h_jg_i, & |i-j|>1 \\ q^{\pm\alpha(g,h)}h_{i\pm1}g_i & j=i\pm1
\\
(gh)_i &j=i\end{cases}$$
where the  bihomomorphism  $\alpha:G\times G\rightarrow \Z_m$ determines $\tau$.  The following is presumably well-known but can be proved directly using classical techniques, which we provide for the reader's amusement.
\begin{prop}
The algebra $\A_n(G,\tau)$ is semisimple of dimension $|G|^{n-1}$ over $\Q(q)$.
\end{prop}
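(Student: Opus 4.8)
The plan is to establish the claimed dimension and semisimplicity separately. For the dimension, I would first argue that the set of monomials $\{g^{(1)}_1 g^{(2)}_2 \cdots g^{(n-1)}_{n-1} : g^{(j)} \in G\}$ spans $\A_n(G,\tau)$ over $\Q(q)$: the straightening rules given just before the statement allow any product of generators $g_i$ to be rewritten, by repeatedly moving smaller-index generators to the left, into a $\Q(q)$-linear combination of such ordered monomials (each application of the $j=i\pm 1$ rule only introduces a scalar $q^{\pm\alpha(g,h)}$ and does not change the total ``index content'', and the $j=i$ rule merges two generators in the same slot). Since there are exactly $|G|^{n-1}$ such monomials, $\dim_{\Q(q)} \A_n(G,\tau) \le |G|^{n-1}$. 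For the reverse inequality — i.e.\ linear independence of the monomials — the cleanest route is to invoke the general theory of twisted tensor products from \cite{cap,iterated}: since $\vartheta$ and $\sigma$ have been verified to satisfy \eqref{tautwist} and \eqref{iteratedcompat}, the iterated twisted tensor product is, as a vector space, canonically $\Q(q)[G]^{\ot n-1}$, so its dimension is exactly $|G|^{n-1}$ and the ordered monomials form a basis. (Alternatively one can produce a faithful module of dimension $|G|^{n-1}$, e.g.\ the regular representation, to force the lower bound; I mention this as a fallback.)

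For semisimplicity, I would exhibit an explicit nondegenerate symmetrizing (or simply a nondegenerate associative) bilinear form, or better, directly apply the faithfulness-of-the-trace criterion: define the $\Q(q)$-linear functional $t : \A_n(G,\tau) \to \Q(q)$ by $t(g^{(1)}_1 \cdots g^{(n-1)}_{n-1}) = 1$ if every $g^{(j)} = 1$ and $0$ otherwise (the ``coefficient of the identity monomial''). One checks that the bilinear form $(x,y) \mapsto t(xy)$ is nondegenerate: pairing a monomial $m = g^{(1)}_1 \cdots g^{(n-1)}_{n-1}$ against $m' = (g^{(1)})^{-1}_1 \cdots (g^{(n-1)})^{-1}_{n-1}$ (in reverse order, after straightening) yields a nonzero scalar $q^{\ast}$ times the identity monomial, and monomials with mismatched group data contribute $0$; hence the Gram matrix in the monomial basis is monomial (a permutation matrix scaled by roots of unity), so invertible. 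A finite-dimensional algebra over a field of characteristic zero admitting a nondegenerate associative bilinear form is semisimple, which finishes the argument.

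The main obstacle I anticipate is the bookkeeping in the spanning/straightening step and, correspondingly, verifying that the pairing $(x,y)\mapsto t(xy)$ is genuinely nondegenerate rather than merely ``looks diagonal'': one must be careful that straightening $m\cdot m'$ really returns a \emph{scalar multiple of a single ordered monomial} and track that the scalar is a nonzero root of unity (it is, since each straightening move multiplies by some $q^{\pm\alpha(g,h)} \in U(1)$, and these never vanish). A secondary subtlety is that $\A_n(G,\tau)$ is defined over $\Q(q)$ rather than $\C$; since $\Q(q)$ is a field of characteristic $0$ the nondegenerate-form criterion applies verbatim, and extension of scalars to $\C$ preserves semisimplicity, so nothing is lost. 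If one prefers to avoid the combinatorics entirely, the slick alternative is: \cite[Theorem 2.1]{iterated} gives that $\A_n(G,\tau) \cong \Q(q)[G] \ot_{T_2} \A_{n-1}(G,\tau)$ as algebras, and one shows by induction that a twisted tensor product of a semisimple algebra with a semisimple algebra along a twisting map of the form $\vartheta(a\ot b)=\tau(a,b)\,b\ot a$ with $\tau$ valued in roots of unity is again semisimple — but making this last clause precise still essentially requires the trace-form computation above, so I would present the direct argument.
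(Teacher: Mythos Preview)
Your dimension argument is fine and matches the paper's: both observe that, by construction as an iterated twisted tensor product, $\A_n(G,\tau)$ is $\Q(q)[G]^{\ot n-1}$ as a vector space, so the ordered monomials form a basis.

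Your semisimplicity argument, however, contains a genuine gap. The criterion you invoke at the end---``a finite-dimensional algebra over a field of characteristic zero admitting a nondegenerate associative bilinear form is semisimple''---is false. Such an algebra is merely \emph{Frobenius}, and Frobenius algebras need not be semisimple: $\C[x]/(x^2)$ carries the nondegenerate associative (indeed symmetric) form $(a+bx,\,c+dx)\mapsto ad+bc$, yet has nonzero Jacobson radical $(x)$.

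The correct criterion, which you allude to earlier as ``faithfulness of the trace,'' is that a finite-dimensional algebra is semisimple if and only if the trace form of its \emph{regular representation} is nondegenerate. Your functional $t$ is in fact a scalar multiple of the regular trace: left multiplication by a monomial $m$ permutes the monomial basis up to powers of $q$, fixing a basis element only when $m=1$, so the regular trace of $m$ is $|G|^{n-1}$ or $0$ accordingly. But you never verify this identification, and without it the final implication does not follow. Once that step is added, your argument is complete.

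For comparison, the paper takes a different route: Maschke-style averaging. The key observation is that the set $M=\{q^\ell g^{(i_1)}_1\cdots g^{(i_{n-1})}_{n-1}\}$ is closed under multiplication and inversion, so for any submodule $X$ and any linear projection $\pi$ onto $X$, the average
\[
T_\pi(y)=\frac{1}{m|G|^{n-1}}\sum_{\mathbf{t}\in M}\mathbf{t}\,\pi(\mathbf{t}^{-1}y)
\]
is an $\A_n(G,\tau)$-module projection onto $X$. This sidesteps the trace-form subtlety entirely; your approach (once repaired) is a legitimate alternative, with the advantage that the same nondegenerate trace form immediately exhibits $\A_n(G,\tau)$ as a symmetric Frobenius algebra.
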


\begin{proof}
Notice that the set of monomials in normal form $M:=\{q^\ell g^{(i_1)}_1\cdots g^{(i_{n-1})}_{n-1}: g^{(i_j)}\in G,\ell\in\Z_m\}$ form a basis for $\A_n(G,\tau)$ over $\Q$ since $\A_n(G,\tau)$ is $\Q(q)[G]^{\ot n-1}$ as a vector space.   To show that $\A_n(G,\tau)$ is semisimple, let $X\subset\A_n(G,\tau)$ be a submodule, and $\pi:\A_n(G,\tau)\rightarrow X$ any vector space projection.  Note that any $\textbf{t}\in M$ has an inverse in $M$, since the straightening rules allow us to write $\textbf{t}^{-1}$ in the normal form of $M$.  We then use the standard averaging trick to find an $\A_n(G,\tau)$-module projection onto $X$:
$$T_\pi(y):=\frac{1}{m|G|^{n-1}}\sum_{\textbf{t}\in M}\textbf{t}\pi(\textbf{t}^{-1}y).$$  One can readily check that $T_\pi$ is a surjective $\A_n(G,\tau)$-module homomorphism so that the kernel of $T_\pi$ provides a direct complement to $X$ in $\A_n(G,\tau)$, proving that $\A_n(G,\tau)$ is semisimple.
\end{proof}

\subsubsection{Connection to group extensions}
We also note the following alternative construction of the algebra $\A_n(G, \tau)$ via group extensions.  In this section, we show that, as a $\mathbb{Q}$-algebra, $\A_n(G, \tau)$ is isomorphic to the group algebra over $\mathbb{Q}$ of a central extension of $G^{n-1}$.

More concretely, let $G$ be a finite group and $m \mid \mathrm{exp}(G)$. Let $\alpha: G \times G \to \mathbb{Z}_m$ be a bihomomorphism.  For $n \ge 2$, let $c : G^n \times G^n \to \mathbb{Z}_m$ be the bihomomorphism defined by
$$c(g, h) = -\sum_{i=1}^{n-1} \alpha(h_i, g_{i+1}).$$
Since $c$ is a bihomomorphism, it satisfies the 2-cocycle condition.  We define ${G^{\times_\alpha n}}$ to be the central extension of $G^n$ corresponding to the 2-cocycle $c \in  Z^2(G^n, \mathbb{Z}_m)$.

\begin{prop}
Let $q$ be a primitive $m$-th root of unity. There is an isomorphism of $\mathbb{Q}$-algebras $$\A_{n+1}(G,\tau) \cong \mathbb{Q}(G^{\times_\alpha n}),$$ where $\tau \colon \Q(q)[G]\ot \Q(q)[G]\rightarrow\Q(q)[G]\ot \Q(q)[G]$ is the same twisting map defined above, i.e. $\tau(g_1\ot h_2)=q^{\alpha(g,h)}h_2\ot g_1$ on basis elements.
\end{prop}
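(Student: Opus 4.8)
The plan is to write the isomorphism down explicitly on bases, reducing the whole verification to one ``straightening'' identity in $\A_{n+1}(G,\tau)$. Recall that $\A_{n+1}(G,\tau)$ has $\Q$-basis $M=\{q^{\ell}\,g^{(1)}_1 g^{(2)}_2\cdots g^{(n)}_n:\ell\in\Z_m,\ g^{(i)}\in G\}$ of normal-form monomials; in particular $|M|=m|G|^n=|G^{\times_\alpha n}|$. As a set $G^{\times_\alpha n}=\Z_m\times G^n$ with multiplication $(\ell,g)(\ell',h)=(\ell+\ell'+c(g,h),\,gh)$, and its elements form the standard $\Q$-basis of $\Q(G^{\times_\alpha n})$. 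I would define a $\Q$-linear map
\[
\Phi\colon \Q(G^{\times_\alpha n})\longrightarrow \A_{n+1}(G,\tau),\qquad
\Phi(\ell,g)=q^{\ell}\,g^{(1)}_1 g^{(2)}_2\cdots g^{(n)}_n ,
\]
for $g=(g^{(1)},\dots,g^{(n)})\in G^n$. By construction $\Phi$ sends the basis of $\Q(G^{\times_\alpha n})$ bijectively onto $M$, so it is already a $\Q$-linear isomorphism and sends $(0,e)\mapsto 1$; it only remains to check multiplicativity on group elements. Since the central subgroup $\Z_m=\langle z\rangle$ is sent to the central scalar $q$, this reduces to the case $\ell=\ell'=0$, i.e. to proving in $\A_{n+1}(G,\tau)$ that
\[
\bigl(g^{(1)}_1\cdots g^{(n)}_n\bigr)\bigl(h^{(1)}_1\cdots h^{(n)}_n\bigr)=q^{\,c(g,h)}\,(g^{(1)}h^{(1)})_1\cdots(g^{(n)}h^{(n)})_n ,
\]
where $c(g,h)=-\sum_{i=1}^{n-1}\alpha(h^{(i)},g^{(i+1)})$.

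I would prove this identity by induction on $n$, equivalently by sweeping the second block to the left one generator at a time. Pushing $h^{(1)}_1$ past $g^{(n)}_n,\dots,g^{(2)}_2$, it commutes with every $g^{(j)}_j$ for $j\ge 3$ (positions differing by more than one), while swapping past $g^{(2)}_2$ contributes exactly the scalar $q^{-\alpha(h^{(1)},g^{(2)})}$ via the adjacent straightening rule, after which $g^{(1)}_1 h^{(1)}_1=(g^{(1)}h^{(1)})_1$ is absorbed into the first slot. Iterating with $h^{(2)}_2,\dots,h^{(n)}_n$, the $i$-th sweep contributes $q^{-\alpha(h^{(i)},g^{(i+1)})}$ for $1\le i\le n-1$ and nothing for $i=n$, and leaves the normal form $(g^{(1)}h^{(1)})_1\cdots(g^{(n)}h^{(n)})_n$. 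Collecting the scalars yields $q^{-\sum_{i=1}^{n-1}\alpha(h^{(i)},g^{(i+1)})}=q^{c(g,h)}$, which is the claimed identity; hence $\Phi$ is a $\Q$-algebra homomorphism, and being bijective it is an isomorphism.

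The main obstacle is entirely the bookkeeping in this sweep: one must apply the adjacent relation $g_i h_{i\pm 1}=q^{\pm\alpha(\cdot,\cdot)}h_{i\pm 1}g_i$ in the orientation that makes the $\alpha$-arguments come out in exactly the order $\alpha(h^{(i)},g^{(i+1)})$ dictated by the definition of $c$, and to handle the boundary cases (adjacent versus distance $\ge 2$, and the top index $i=n$) without slippage; a misplaced sign or a transposition of the arguments of $\alpha$ would change $c$ and break the match, so this step rewards care. A minor auxiliary point, already noted before the statement, is that the bihomomorphism property of $c$ is precisely what guarantees it is a $2$-cocycle, so that $G^{\times_\alpha n}$ --- and hence the target $\Q(G^{\times_\alpha n})$ --- is well defined.
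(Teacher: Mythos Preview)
Your proof is correct and is essentially the same argument as the paper's, carried out in the inverse direction: the paper defines the bijection $\phi:\A_{n+1}(G,\tau)\to\Q(G^{\times_\alpha n})$ and checks that the images of the generators $g_i$ satisfy the commutator relation $[\phi(g_i),\phi(h_{i+1})]=\phi(q^{\alpha(g,h)})$, whereas you define the inverse bijection $\Phi$ and verify multiplicativity on arbitrary basis monomials by the sweep computation. Both reduce to exactly the same adjacent-index identity, so the two approaches differ only in whether one checks defining relations on generators or products of normal-form monomials.
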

\begin{proof}
We represent $G^{\times_{\alpha} n}$ as the set $\mathbb{Z}_m \times G^n$ where the multiplication is given by 
$$(x \times g) \cdot (y \times h) = (c(g,h) + x + y) \times gh.$$
Let $\phi : \A_{n+1}(G,\tau) \to \mathbb{Q}(G^{\times_\alpha n})$ be the $\mathbb{Q}$-linear bijection defined by $q^j g_1 \otimes \cdots \otimes g_{n-1} \mapsto j \times (g_1, \ldots g_{n-1})$.  To see that $\phi$ is an algebra map, we need to verify that it preserves the straightening relations.  If $i - j \neq 1$, we have $c(\phi(g_i), \phi(h_j)) = 1$.  If $i - j =  1$, we have $c(\phi(g_i), \phi(h_j)) = \alpha(h, g^{-1})$.  Thus, 
\begin{align*}
[\phi(g_i), \phi(h_{i+1})] & = \phi(g_i) \phi(h_{i+1}) \phi(g_i^{-1}) \phi(h_{i+1}^{-1}) \\
& =  \phi(g_i) (\alpha(g,h) \times (e, \ldots, e, g_i^{-1}, h_{i+1}, e, \ldots, e)) \phi(h_{i+1}^{-1}) \\ 
& = \alpha(g,h) \times e \\
& = \phi\left(q^{\alpha(g,h)} \right).
\end{align*}
Thus, the straightening relations are preserved by $\phi$. It follows that $\phi$ is an isomorphism of $\mathbb{Q}$-algebras.
\end{proof}


\subsection{Braid group representations}
    
    The second part of the problem is to look for and classify the representations of the braid group inside the algebras $\A_n(G,\tau)$.  Fix a pair $(G,\alpha)$ where $G$ is a finite group and $\alpha:G\times G\rightarrow \Z_m$ is a bihomomorphism and the corresponding twisted tensor power $\A_n(G,\tau)$.  We are interested in finding invertible:
$$r=\sum_{g\in G}f(g)g\in\C[G]$$
so that for $i=1,2$ the $r_i:=\sum_{g\in G}f(g)g_i\in\A_3(G,\tau)\ot_{\Q(q)} \C$  satisfy the braid relation $r_1r_2r_1=r_2r_1r_2$. We shall call such solutions $r$ \textbf{$\A(G,\tau)$-Yang-Baxter operators (YBOs)}. Since the braid equation may be written as a linear combination of monomials $g^{(i_1)}_1g^{(i_{2})}_2\in \A_3(G,\tau)$ with coefficients in $\Q(q)[x_1,\ldots,x_{|G|}]$ where $x_i:=f(g^{(i)})$, we may assume that the function $f$ takes values in $\overline{\Q(q)}=\overline{\Q}$ i.e., the algebraic closure of $\Q(q)$.  For the sake of notation we will usually just consider scalars in the complex field $\C$.

This should be compared with the problem of finding Yang-Baxter operators on a vector space $V$.  In our case we seek invertible $r\in\C[G]$ so that $\rho_n(\sigma_i)=r_i$ is a homomorphism $\rho_n:\B_n\rightarrow\A_n(G,\tau)$ (suitably complexified). As $\A_n(G,\tau)\ot\C$ is a finite dimensional semisimple $\C$-algebra, one can obtain $\B_n$-representations by pull-back on any $\A_n(G,\tau)\ot\C$-module. For example, one might use the regular representation to get a sequence of braid group representations $(\rho_n,V_n)$, as defined in \cite{RW}.  However one cannot, in general, turn such a homomorphism into a solution to the Yang-Baxter equation.  There is one situation where one can perform such a transformation: if the sequence of braid group representations $(\rho_n,V_n)$ is \emph{localizable} in the sense of \cite{RW}.  For example, suppose $\A_n(G,\tau)$ has a representation $\vartheta_n$ of the form $V^{\ot n}$ with $\vartheta(g_i)$ acting locally: $$\vartheta_n(g_i)(v_1\ot\cdots v_{i}\ot v_{i+1}\ot\cdots\ot v_n)=(v_1\ot\cdots \vartheta(g)(v_{i}\ot v_{i+1})\ot\cdots\ot v_n)$$ where $\vartheta:G\rightarrow \Aut(V^{\ot 2})$ is a $G$-representation.  Then $\vartheta(r)$ will be a Yang-Baxter operator, and $\vartheta_n(r_i)=(Id_V)^{\ot i-1}\ot \vartheta(r)\ot (Id_V)^{\ot n-i-1}$ is a localization of the corresponding braid group representation.

We may also put a $*$-structure on $\A_n(G,\tau)$ as follows: define $g_i^*=g_i^{-1}$ and $q^*=1/q=\overline{q}$ and then extend to an antiautomorphism on products and linearly on sums in the usual way.  This makes $\A_n(G,\tau)$ a $*$-algebra.  In this way we can discuss unitary $\A(G,\tau)$-YBOs, as those $r$ with $r^*r=1$.  

\subsection{Equivalence Classes of $\A_n(G,\tau)$}
For a fixed $G$, different choices of $\tau$ give isomorphic algebras.  We identify a few of these isomorphisms in order to reduce the complexity of our main goal.

One equivalence of $\A_n(G,\tau)$ comes from the choice of Galois conjugates of $q$.  For $(s,m)=1$ defining $\tau^s(g,h)=q^{s\alpha(g,h)}$ obviously gives us $\A_n(G,\tau)\cong\A_n(G,\tau^s)$ by Galois conjugation. 

Another equivalence comes from automorphisms of $G$.
If $\psi\in\Aut(G)$ then $\beta(g,h):=\alpha(\psi(g),\psi(h))$ gives us a new $\tau^\psi(g,h)=\tau(\psi(g),\psi(h))$ and gives us $\A_n(G,\tau)\cong\A_n(G,\tau^\psi)$. 

An important problem is to understand the orbits under these actions. The Galois symmetry amounts to replacing $\alpha$ with $s\alpha$.   Now observe that since $\alpha:G\times G\rightarrow \Z_m$ has abelian co-domain, it is determined by its values on the abelianization $G_{ab}:=G/[G,G]$. So we may assume $G=A$ is abelian for the purposes of determining the orbits.  It is clear that the bicharacters $A\times A\rightarrow U(1)$ for a finite abelian group $A$ form an abelian group under pointwise addition.  In fact, this group is isomorphic to $\Hom(A,A^*)$ where $A^*=\Hom(A,U(1))$ is the group of characters.  Indeed, if $\chi:A\times A\rightarrow U(1)$ is a bicharacter then define $F_\chi\in\Hom(A,A^*)$ by $F_\chi(a)(b)=\chi(a,b)$.  Since $\chi$ is a bicharacter $F_\chi$ is a $\Z$-module map with values in $\Hom(A,U(1))$.  Since $f\in\Hom(A,A^*)$ determines a unique bicharacter $\chi_f(a,b)=f(a)(b)$, the map $\chi\mapsto F_\chi$ is clearly a bijection, and $F_\chi+F_\eta=F_{\chi+\eta}$.  As the bihomomorphisms $\alpha:G\times G\rightarrow \Z_m$ are in one-to-one correspondence with bicharacters, this determines all such bihomomorphisms $\alpha$.   For elementary abelian $p$-groups $A=(\Z_p)^k$ a bihomomorphism to $\Z_p$ can be represented as a $k\times k$ matrix $X$ with $i,j$ entry $\alpha(e_i,e_j)\in\Z_{p}$ where $e_i$ is the generator $(0,\ldots,0,1,0,\ldots,0)$ of the $i$th factor.  That is, $\alpha(g,h)=g^TXh$ where we identify $g\in A$ with column vectors. Of course an automorphism $\Psi\in\Aut(A)\cong\GL_k(\Z_p)$ as well, so we may sweep out orbits of $\alpha$ under $\Aut(G)$ as $\Psi^T X\Psi$, since $\alpha(\Psi(g),\Psi(h))=g^T\Psi^TXh$.  Although we will not need it in what follows, one can handle general abelian $p$-groups in a similar way by identifying $\Z_{p^a}$ with a subgroup of $\Z_{p^b}$ for $a\leq b$.  Even more generally, bihomomorphisms on a finite abelian group can be factored by restricting to $p$-Sylow subgroups.

\subsubsection{Forbidden Symmetries}
Notice we have not considered applying \emph{different} automorphisms of $G$ to each factor, as this will conflict with our goal of finding $\A(G,\tau)$-YBOs--they will not have a uniform description, i.e.,  the form of $r_i$ not be independent of $i$.  Even ignoring this constraint, if we apply different automorphism to each tensor factor of $\A_n(G,\tau)$ the twisting will no longer be uniform across the iterated twisted tensor product.  However, in the following special cases, uniformity is preserved.  

\begin{prop}\label{smith form}  Suppose $G=\mathbb{Z}_m^k$ for an odd integer $m$ and $\tau(x,y)=q^{\alpha(x,y)}$ for a non-degenerate \emph{symmetric} or \emph{skew-symmetric} bihomomorphism $\alpha:G\times G\rightarrow \Z_p$.  Then there is an isomorphism of algebras $\A_n(G,\tau)\cong\A_n(G,\chi)$, where $\chi(x,y)=q^{x^Ty}$.
\end{prop}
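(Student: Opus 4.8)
First I would reduce to the case $G=\Z_p^k$ with $\alpha$ valued in $\Z_p$ for an odd prime $p$: a bihomomorphism of a finite abelian group is supported on its Sylow subgroups, so $\A_n(G,\tau)$ splits as a tensor product over the primes dividing $\mathrm{exp}(G)$, and a single prime-power factor behaves just as the prime case (the relevant data all being units). Write $X\in\GL_k(\F_p)$ for the Gram matrix of $\alpha$, so $\alpha(x,y)=x^TXy$; the target form $\chi$ has Gram matrix the identity $I_k$. Recall that applying one $P\in\Aut(G)=\GL_k(\F_p)$ uniformly across all $n-1$ slots replaces $X$ by the congruent matrix $P^TXP$; this alone is not enough---in even rank the discriminant obstructs a symmetric $\alpha$, and a skew $\alpha$ is never congruent to $I_k$---so a genuinely slot-dependent substitution is needed.

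The device is the following. The algebra $\A_n(\Z_p^k,\tau)$ is presented by generators $(e_r)_i$ ($1\le r\le k$, $1\le i\le n-1$) subject to $(e_r)_i^p=1$, the commuting of $(e_r)_i$ with $(e_s)_i$ and with $(e_s)_j$ for $|i-j|\ge2$, and $(e_r)_i(e_s)_{i+1}=q^{X_{rs}}(e_s)_{i+1}(e_r)_i$, the remaining adjacent relations being consequences of these; similarly $\A_n(\Z_p^k,\chi)$, with $I_k$ in place of $X$. Hence, for any $N_1,\dots,N_{n-1}\in\GL_k(\F_p)$, the rule $(e_r)_i\mapsto\prod_{b}\big((e_b)_i\big)^{(N_i)_{br}}$ extends to an algebra map $\A_n(\Z_p^k,\chi)\to\A_n(\Z_p^k,\tau)$ precisely when the chain of identities $N_i^TXN_{i+1}=I_k$ holds for all $i$ (the exact index placement depending on one's sign conventions); when it does, the map is onto---each $N_i$ being invertible---between algebras of dimension $p^{k(n-1)}$, hence an isomorphism. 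Since $X$ is invertible such $N_i$ exist: start from $N_1=I_k$ and recurse $N_{i+1}=X^{-1}(N_i^{-1})^T$, which stays in $\GL_k(\F_p)$. For symmetric $X$ the $N_i$ may be taken two-periodic, $N_i=I_k$ for odd $i$ and $N_i=X^{-1}$ for even $i$; after first diagonalizing $X$ by a uniform congruence (possible since $2$ is a unit in $\F_p$) this is simply the operation of fixing alternate copies of $G$ and rescaling the coordinates of the remaining copies, which is harmless exactly because distinct coordinates of a diagonal form do not interact. For skew $X$ no two-periodic choice works; one uses the four-periodic pattern $I_k,\,X^{-1},\,-I_k,\,-X^{-1}$. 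Equivalently, after normalizing $X$ to a sum of $\ell=k/2$ hyperbolic planes one has $\A_n(\Z_p^{2\ell},\tau)\cong\A_n(\Z_p^2,H)^{\ot\ell}$ with $H=\left(\begin{smallmatrix}0&1\\-1&0\end{smallmatrix}\right)$, and $\A_n(\Z_p^2,H)$ breaks into two mutually commuting ``paths'' $\{(e_1)_1,(e_2)_2,(e_1)_3,\dots\}$ and $\{(e_2)_1,(e_1)_2,(e_2)_3,\dots\}$, each of which---after inverting certain of its members---is a Goldschmidt--Jones tower $\A_n(\Z_p,\chi)$, so $\A_n(\Z_p^2,H)\cong\A_n(\Z_p,\chi)^{\ot2}\cong\A_n(\Z_p^2,\chi)$.

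I expect the skew case to be the crux: a monomial substitution of this type preserves the symmetry type of the defining form, so passing from a skew $\alpha$ to the symmetric target $\chi$ genuinely requires the less transparent ``two disjoint paths'' identification---equivalently, the solution of the matrix recursion cannot be taken two-periodic. The remaining ingredients are routine: the Sylow reduction; the symmetric normal form over $\F_p$ (which uses that $2$ is a unit) and the symplectic normal form; the dimension count; and keeping track of signs and transposes so that the condition on the $N_i$ is a single chain of identities rather than an over-determined pair. I note, finally, that the symmetry hypothesis enters only in arranging the $N_i$ to be periodic of small period---the substitution device itself yields the analogous statement for an arbitrary non-degenerate $\alpha$.
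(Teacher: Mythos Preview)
Your argument is correct and is essentially the paper's: both proofs apply a slot-dependent linear substitution $g_i\mapsto (N_ig)_i$ and check that the adjacent commutation relations force exactly the chain $N_i^TXN_{i+1}=I_k$, then exhibit a two-periodic choice in the symmetric case and a four-periodic one in the skew case.  The paper phrases the symmetric step via the Smith normal form $ASB=I$ (taking $N_i=(A^T)^{-1}$ on odd slots and $N_i=B^{-1}$ on even slots), and for the skew case first normalizes to the standard symplectic matrix $S$ and sets $N_i=S^{i-1}$, then observes the resulting form $-x^Ty$ is symmetric so the first step finishes it.  Your choice $N_{2j-1}=I_k$, $N_{2j}=X^{-1}$ (symmetric) and the recursion $N_{i+1}=X^{-1}(N_i^{-1})^T$ (general) is the same mechanism, packaged a bit more cleanly---in particular you correctly note that the symmetry hypothesis is used only to make the $N_i$ periodic, and that the recursion already proves the statement for an arbitrary non-degenerate $\alpha$.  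Two minor remarks: the Sylow reduction at the outset is unnecessary, since invertibility of $X$ over $\Z_m$ is all that is used; and the ``diagonalize first'' aside in your symmetric case is likewise superfluous, as your own choice $I_k,X^{-1}$ works without it.
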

\begin{proof}
Observe that for $G =\mathbb{Z}_p^k$ with $p$ odd, nondegenerate bihomomorphisms $\alpha: G \times G \to \mathbb{Z}_m$ are the same as nondegenerate bilinear forms, all of which are of the form $\alpha(x,y) = x^T A y$ for some matrix $A \in GL_k(\Z_m)$.   

First assume $\alpha(x,y) = x^T S y$ a non-degenerate symmetric bilinear map $G \times G \to \Z_m$, let  $A,B \in GL_k(\Z_m)$ be such that $I=ASB$ (the Smith normal form of $S)$.  Let $\eta(x,y):= x^T y$ be the corresponding twist.  We claim that the map $\phi: G^{n-1} \to G^{n-1}$ defined by 
\[
\phi(g_i) = \begin{cases}
((A^T)^{-1}g)_i, & i \text{ odd} \\
(B^{-1}g)_i, & i \text{ even}
\end{cases}
\]
induces an isomorphism $\A_n(G, \tau) \cong \A_n(G, \eta)$.  
Indeed, for $x,y \in G$, we have 
$$\eta((A^{T})^{-1} x, B^{-1}y) = ((A^T)^{-1} x)^T (A S B) (B^{-1} y)  = x^T S y.$$ 
On the other hand, 
\begin{align*}
    \eta(B^{-1}x, (A^{T})^{-1} y) & = (B^{-1}x)^T (A^T)^{-1} y \\
    & = x^T (B^{-1})^T (B^T S^T A^T) (A^T)^{-1} y \\
    & = x^T S y.
\end{align*}
Now since the Smith normal form of a non-degenerate symmetric matrix over $\Z_m$ is diagonal and we may rescale each entry by isomorphisms described above, we obtain an isomorphism with $\A_n(G,\chi)$ as promised.

Now suppose that $k$ is even, and $S$ is invertible and skew-symmetric so that we may assume $S=\begin{pmatrix} 0 &I_0 \\-I_0 &0
\end{pmatrix}$ where $I_0=Id_{(\Z_m)^{k/2}}$ and  $\alpha(x,y)=x^TSy$ the associated bihomomorphism. Notice that $S^2=-I$ and $S^T=-S$.  Then define $\phi(g_i)=(S^{i-1}g)_i$ and $\eta(x,y)=-x^Ty$. To see that $\phi$ is an algebra homomorphism $\A_n(G,\tau)\cong\A_n(G,\chi)$ we compute: $$\alpha(S^ix,S^{i+1}y)=x^T(-S)^i(S)S^{i+1}y=(-1)^ix^TS^{2i+2}y=-x^Ty=\eta(x,y).$$  Since $\phi$ is clearly bijective, we have shown that it is an algebra isomporphism.  Since $\eta$ is symmetric we may use the above to obtain an isomorphism $\A_n(G,\tau)\cong\A_n(G,\chi)$ as promised.

\end{proof}

\subsection{Symmetries of $(\A_n(G,\tau),r)$}\label{r symmetries}

For a fixed $G$ and $\tau$, the set $R$ of $\A(G,\tau)$-YBOs could be quite large: they are determined by the functions $f:G\rightarrow \C$ with $r=\sum_{g\in G}f(g)g\in R$.  To reduce the search space we can make use of various symmetries, identifying function $f$ in the same orbit.  Informally we will say that two $\A_n(G,\tau)$-YBOs $r$ and $s$ are equivalent if $\rho^r,\rho^s:\B_n\rightarrow \A_n(G,\tau)$ have the same image, projectively.

One obvious symmetry comes from the homogeneity of the braid equation  $r_1r_2r_1=r_2r_1r_2$: we can rescale any solution by $z\in\C^\times$ and if our solution is unitary, then we can rescale by $z\in U(1)$.  This corresponds to identifying $f$ and $zf$, since the $\B_n$ images are projectively equivalent.

  We also have rescaling symmetries of the form $g_i\mapsto q^{s(g)}g_i$. Since the straightening relations in $\A_n(G,\tau)$ are homogeneous it is only necessary to check that $\chi:G\rightarrow U(1)$ by $\chi(g)= q^{s(g)}$ is a linear character. This automorphism of $\C[G]$ lifts to an automorphism of $\A_n(G,\tau)$, which carries $r_i=\sum_{g\in G}f(g)g_i$ to $r_i^\chi:=\sum_{g \in G}f(g)q^{s(g)}g_i$ and hence the images are isomorphic. Therefore we can identify the solutions that are in the orbit of $f$ under $f\mapsto q^{s(g)}f$. 
  
  Denote by $\Aut(G,\alpha)$ the group of automorphisms $\psi\in\Aut(G)$ such that $\alpha\circ(\psi\times\psi)=\alpha$. Any $\psi\in\Aut(G,\alpha)$ lifts to an automorphism of $\A_n(G,\tau)$.  For such $\psi\in\Aut(G,\alpha)$ the $\psi(r_i)=\sum_{g\in G}f(g)\psi(g)_i$ for $i=1,2$ will satisfy the braid relation, and hence $\psi^{-1}(r)=\sum_{g\in G}f(\psi^{-1}(g))g$ will also be a $\A(G,\tau)$-YBO.  Moreover, this obviously induces an isomorphism between $\rho^r(\B_n)$ and $\rho^{\psi(r)}(\B_n$, so we will thus identify all solutions in the orbit of $f$ under this symmetry $f\mapsto \psi^*f$ for $\psi\in\Aut(G,\alpha)$.

\subsubsection{Symmetry induced by Inversion}\label{inversion}.    An important special case of symmetry induced by automorphisms is the following: If $G$ is \emph{abelian}, the inversion automorphism $\iota:g\mapsto g^{-1}$ on $G$ lifts to an automorphism of $\A_n(G,\tau)$ by defining $\iota(q)=q$, and $\iota(g_ih_j)=\iota(g_i)\iota(h_j)=g_i^{-1}h_j^{-1}$ on products and extending linearly.  We will carefully check the defining relations are preserved.  Firstly, $$\iota(g_i)\iota(h_{i+1})=g_i^{-1}h_{i+1}^{-1}=(h_{i+1}g_i)^{-1}=(q^{-\alpha(g,h)}g_ih_{i+1})^{-1}=q^{\alpha(g,h)}\iota(h_{i+1})\iota(g_i)$$ so that $\iota(g_ih_{i+1})=q^{\alpha( \iota(g),\iota(h))}\iota(h_{i+1}g_i)$.  Secondly since $G$ is abelian, $$\iota((gh)_i)=\iota(g_ih_i)=g_i^{-1}h_i^{-1}=
    (g^{-1})_i(h^{-1})_i=(g^{-1}h^{-1})_i\stackrel{\star}{=}((gh)^{-1})_i=(\iota(gh))_i,$$ where the equality denoted $\stackrel{\star}{=}$ uses the $G$ abelian assumption.  Finally note that if $g_i$ and $h_j$ commute then so do $\iota(g_i)$ and $\iota(h_j)$.  If $r=\sum_{g\in G}f(g)g$ is an $\A(G,\tau)$-YBO then so is $\iota(r)=\sum_{g\in G}f(g)g^{-1}$, hence $r^{\prime\prime}=\sum_{g\in G}f(g^{-1})g$ is as well.

  If $G$ is abelian there is an additional symmetry of the braid relation $r_1r_2r_1=r_2r_1r_2$ that we may use to show that  $r=\sum_{g\in G}f(g)g$  and $r^\prime:=\sum_{g\in G}\overline{f(g)}g$ have isomorphic images.  The map $\sigma$ on  $\A_3(G,\tau)$ given by $\sigma(g_1)=g_2$ and $\sigma(xy)=\sigma(y)\sigma(x)$ for $x,y\in\A_3(G,\tau)$ and $\sigma(q)=q^{-1}=\overline{q}$ is an anti-automorphism of $\A_3(G,\tau)$, since 
  $$\sigma(g_1h_2)=h_1g_2=q^{\alpha(h,g)}g_2h_1=\sigma(q^{\alpha(g,h)}h_2g_1)$$ and $\sigma((gh)_1)=(gh)_2=g_2h_2=h_2g_2=\sigma(g_1h_1).$ This implies that $r^\prime$ is a $\A(G,\tau)$-YBO since $$r_2^\prime r_1^\prime r_2^\prime =\sigma(r_1r_2r_1)=\sigma(r_2r_1r_2)=r_1^\prime r_2^\prime r_1^\prime.$$  

\section{Case Studies}\label{case studies}
In practice we take the following approach, using symbolic computation software such as Magma and Maple.
\begin{enumerate}
    \item Fix $G$ and $\alpha$, and present the corresponding finitely generated algebra $\A_3(G,\tau)\times \Q(q)[x_g:g\in G]$, using generators and relations, with the $x_g$ being commuting variables.
    \item Define $r_i=\sum_{g\in G} x_gg_i$ for $i=1,2$, and use non-commutative Gr\"obner bases to write $r_1r_2r_1-r_2r_1r_2$ in its normal form i.e., as a polynomial in the $g_1^{(i_1)}g_2^{(i_2)}$ with coefficients in $\Q(q)[x_g:g\in G]$.
    \item Compute a commutative Gr\"obner basis for ideal generated by the the coefficients using pure lexicographic order to find the ideal of solutions.
    \item Use symmetries to describe families of related solutions.
\end{enumerate}
Often we find that there are finitely many solutions, so that we can give a complete description of them.

\subsection{Abelian Groups}

\subsubsection{Prime Cyclic Groups $G=\Z_p$}
We first apply our approach to a well-known case both as a proof of principle and a template for further study.

Let $p\geq 3$ be prime and fix $q$ a primitive $p$th root of unity.  A nontrivial bicharacter $\Z_p\times\Z_p\rightarrow U(1)$ must take values in $\mu_p=\{q^j:0\leq j\leq p-1\}$, so that any bicharacter corresponds to a bihomomorphism $\alpha\in\Hom(\Z_p\times\Z_p,\Z_p)$, which is determined by $\alpha(1,1)$.  Define a bihomomorphism $\alpha: \Z_p\times\Z_p\rightarrow \Z_p$ by $\alpha(1,1)=2$.  The orbit of $\alpha$ under automorphisms of $\Z_p$ give half of all non-trivial bihomomorphisms, since $\alpha(\varphi_k(1),\varphi_k(1))=2k^2$, which is a square modulo $p$ if and only if $2$ is.  Galois symmetry $\psi(q)=q^s$ maps the bicharacter $\tau(x,y)=q^{\alpha(x,y)}$ to $\tau^\psi(x,y)=q^{s\alpha(x,y)}$.  Thus we may assume that our bicharacter is associated with the bihomomorphism $\alpha(x,y)=2xy$.   The reader may wonder why we do not choose $\alpha^\prime(x,y)=xy$ instead--we will see later that this simplifies the form of our $\A_n(\Z_p,\tau)$-YBOs. Indeed,
this choice of $\alpha$ recovers the $\Q(q)$-algebra $\A_n(\Z_p)$ described in the introduction, with generators $u_1,\ldots,u_{n-1}$ satisfying:
$u_iu_{i+1}=q^2u_{i+1}u_i$ and $u_iu_j=u_ju_i$ for $|i-j|>1$ and $u_i^p=1$.
The goal now is to find invertible $\A(\Z_p)$-YBOs $r=\gamma\sum_{j=0}^{p-1}f(j)u^j\in\C[\Z_p]$.

To reduce redundancy we will normalize $f(0)=1$ (the solutions where $f(0)=0$ do not seems to be interesting).  The symmetries of these solutions again come in several forms.  Firstly, since each automorphism of $\Z_p$ that leaves $\alpha$ invariant leads to an automorphism of $\A_n(\Z_p)$ we may identify the corresponding solutions.  For $\alpha(x,y)=2xy$ only inversion $x\rightarrow -x$ leaves $\alpha$ invariant, which means we may freely identify $f$ and $f^\prime(j)=f(-j)$.  We have an additional symmetry in $\A_n(\Z_p)$ given by $u_i\rightarrow q^su_i$, since the first two defining relations are homogeneous and $(q^su_i)^p=u_i^p=1$.  This corresponds to identifying $f$ with $f^s(j):=f(j)q^{js}$.  Finally, complex conjugation is a symmetry of the braid equation $r_1r_2r_1=r_2r_1r_2$, so that we may identify $f$ and its complex conjugate $\overline{f}$.

\subsubsection{The Gaussian Solution} One unitary $\A(\Z_p)$-YBO is the Gaussian solution $r=\frac{1}{\sqrt{p}}\sum_{j=0}^{p-1}q^{j^2}u$, i.e. $f(j)=q^{j^2}$ \cite{GoldschmidtJones,Jones89,GR} and $\gamma=\frac{1}{\sqrt{p}}$.  Complex conjugation gives us the solution $\overline{f}(j)=q^{-j^2}$ and the rescaling symmetry gives us $f^s(j)=q^{j^2+sj}$, giving $2p$ distinct solutions.  

In \cite{GR} it is shown that the braid group representation $\rho_n:\B_n\rightarrow \A_n(\Z_p)$ given by $\sigma_i\rightarrow r_i$ has finite image.  In fact, one has:
\begin{eqnarray*}
 r_iu_{i+1}r_i^{-1}&=&qu_i^{-1}u_{i+1}\\
r_iu_{i-1}r_i^{-1}&=&q^{-1}u_{i-1}u_i,
\end{eqnarray*}
so that the conjugation action on $\A_n(\Z_p)$ provides a homomorphism of $\rho_n(\B_n)$ into monomial matrices, with kernel a subgroup of the center of $\A_n(\Z_p)$.  For $n$ odd the normal form for $\A_n(\Z_p)$ allows one to show that the center consists of scalars, and since any $\rho_n(\beta)$ in the center of $\A_n(\Z_p)$ has determinant a root of unity (under the regular representation of $\A_n(\Z_p)$) the kernel of the conjugation action above is finite, for $n$ odd.  Since $\rho_n(\B_n)\subset\rho_{n+1}(\B_n)$, this is sufficient.

The algebras $\A_n(\Z_p)$ have a \emph{local} representation (see \cite{RW}).  Let $V=\C^p$ and define an operator on $V^{\ot 2}$ by $U(\textbf{e}_i\ot\textbf{e}_j)=q^{j-i}\textbf{e}_{i+1}\ot\textbf{e}_{j+1}$ where $\{\textbf{e}_i\}_{i=0}^{p-1}$ is a basis for $V$ with indices taken modulo $p$.  Then $\Phi_n:u_i\rightarrow (Id_V)^{\ot i-1}\ot U\ot (Id_V)^{\ot n-i-1}$ defines a representation $\A_n(\Z_p)\rightarrow \End(V^{\ot n})$.  In particular $\Phi_2(r)$ is an honest $p^2\times p^2$ YBO.

\begin{example} We use MAGMA \cite{Magma} to work two explicit examples.
First consider the case $G=\Z_3$, and suppose $r=1+au+bu^2$ is a $\A(\Z_3)$-YBO.   All solutions satisfy $a^3=b^3=1$ and $a^2\neq b$, so that there are exactly $6$ distinct solutions (up to rescaling), all of which are obtained from the Gaussian solution via the symmetries described above.  In particular the solutions are all unitary when appropriately normalized.

Similarly for $p=5$,under the additional assumption that $a,b,c,d$ are $5$th roots of unity, we find that there are exactly $10$ non-trivial solutions $r=1+au+bu^2+cu^3+du^4$ (up to rescaling), all of which are obtained from the Gaussian solution via the above symmetries.  These solutions are unitary when appropriately normalized. There are 10 other non-trivial solutions, however, none of them are (projectively) unitary.
\end{example}

\subsection{$G=\Z_p\times \Z_p$}
Let $p$ be an odd prime, and let $G=\Z_p\times\Z_p$.  To classify $\A_n((\Z_p)^2,\tau)$ we first look at orbits of bihomomorphisms $\alpha:(\Z_p)^2\rightarrow \Z_p$.
Such bihomomorphisms are determined by the values on pairs of generators $(1,0),(0,1)$ of $\Z_p\times \Z_p$, encoded in a matrix $A_\alpha:=\begin{pmatrix}a&b\\c&d\end{pmatrix}\in M_2(\Z_p)$, so that $\alpha(x,y)=x^TA_\alpha y$. Under automorphisms $X\in\GL_2(\Z_p)$ of $(\Z_p)^2$ the orbit of $\alpha$ is represented by the matrices $\{X^TA_\alpha X:X\in GL_2(\Z_p)\}$.  From \cite{Waterhouse} we know that there are $p+7$ orbits.  
\begin{example}\label{ex:z3}
The case of $p=3$ can be completely analyzed computationally as follows: from a representative of each of the $10$ orbits of bihomomorphisms $\Z_3\times\Z_3\rightarrow \Z_3$ we use MAGMA \cite{Magma} to search for\emph{non-degenerate, unitary} solutions  $r=\gamma\sum_{i,j}f(i,j)u^iv^j$ to the corresponding $\A(\Z_3\times\Z_3,\tau)$-YBE.  Here by \emph{non-degenerate} we mean that it does not degenerate to the $\Z_3$-case.  The results of these computations are:

\begin{itemize}
    \item Non-degenerate unitary solutions only exists for the classes represented by:\\ $A_1=\begin{pmatrix}2&0\\0&2\end{pmatrix}$,  $A_2=\begin{pmatrix}0&2\\-2&0\end{pmatrix}$ and $A_3=\begin{pmatrix}2&0\\0&-2\end{pmatrix}$.
    \item In all cases, after applying an appropriate symmetry of $(\A_n(\Z_3\times\Z_3,\tau),r)$, the non-degenerate unitary solutions factor as a product of Gaussian $\A_n(\Z_3)$-YBOs, and hence have finite images.
\end{itemize}

\end{example}

From this example we expect that the most interesting ones correspond to non-degenerate symmetric or skew-symmetric bilinear forms on $\Z_p^2$. We also allow ourselves to rescale $\alpha$ by a constant.  Thus we focus on $A_1=\begin{pmatrix}2&0\\0&2\end{pmatrix}$,  $A_2=\begin{pmatrix}0&2\\-2&0\end{pmatrix}$ and $A_3=\begin{pmatrix}2&0\\0&2x\end{pmatrix}$ where $x$ is a non-square modulo $p$.  The appearance of the scalar $2$ is simply for convenience when we make contact with the Gaussian solution.  

We consider each of these cases in turn.  We will distinguish the symmetric cases $A_1,A_3$ by noting that $A_1$ corresponds to an elliptic form, while $A_3$ corresponds to a hyperbolic form.  For $A_\alpha=A_i$ the corresponding algebras $\A_n(\Z_p\times\Z_p,\tau_i)$ have generators 
$u_1,v_1,\ldots,u_{n-1},v_{n-1}$ with the multiplicative group $\langle u_i,v_i\rangle\cong \Z_p\times\Z_p$.  All generators commute except for:

\begin{enumerate}
    \item   For $A_1$: $u_iu_{i\pm1}=q^{\pm2}u_{i\pm 1}u_i$ and $v_iv_{i\pm1}=q^{\pm2}v_{i\pm 1}v_i$.
    \item For $A_2$: $u_iv_{i\pm1}=q^{\pm2}v_{i\pm 1}u_i$ and $v_iu_{i\pm1}=q^{\mp 2}v_{i\pm 1}u_i$.
    \item For $A_3$:  $u_iu_{i\pm1}=q^{\pm2}u_{i\pm 1}u_i$ and $v_iv_{i\pm1}=q^{\pm2x}v_{i\pm 1}v_i$.
\end{enumerate}

We pause to describe the structure of the algebras $\A_n(\Z_m\times\Z_m,\tau_i)$ for arbitrary odd $m$.  Since the monomials in the $u_i,v_i$ form a basis, we see that $\dim_{\Q(q)}\A_n(\Z_m\times\Z_m,\tau_i)=m^{2n-2}.$  

The following proposition explores the structure of $A_n(\Z_m\times\Z_m,\tau_i)$ 
and the subalgebra of fixed points under the automorphism $\iota$ described in Subsection \ref{inversion} given by lifting $u_i\mapsto u_i^{-1},v_i\mapsto v_i^{-1}$ to $\A_n(G,\tau_i)$.  We describe inclusions of algebras in terms of Bratteli diagrams (see \cite{GdlHJ}): generally, to a tower of multi-matrix algebras with common unit $1\in A_1\subset\cdots A_n\subset A_{n+1}\subset\cdots$ we associate a graph with vertices labeled by simple $A_k$-modules $M_{k,i}$ with $d_{k-1,i,j}$ edges between $M_{k,i}$ and $M_{k-1,j}$ if the restriction of $M_{k,i}$ to $A_{k-1}$ contains $M_{k-1,j}$ with multiplicity $d_{k-1,i,j}$.

\begin{prop}

     Let $m$ be odd and $G=\mathbb{Z}_m\times\mathbb{Z}_m$. Consider the algebra $\mathcal{A}_n(G,\tau_i)$ with the twists $\tau_i$ given by $A_i$, $1\leq i\leq 3$. Then \begin{enumerate}
    \item The center of $\A_n(G,\tau_i)$ is 1 dimensional if $n$ is odd and is $m^2$ dimensional if $n$ is even.  Moreover, when $n$ is odd $\A_n(G,\tau_i)\cong M_{m^{n-1}}(\Q(q))$ is simple while for $n$ even $\A_n(G,\tau_i)$  decomposes as a direct sum of $m^2$ simple algebras of dimension $m^{2n-4}$. Moreover, the Bratteli diagram of $\cdots\subset\A_n(G,\tau_i)\subset\cdots$ is given in Figure \ref{fig:bratfora}.
    \item Consider the fixed point subalgebra $\CC_n(G,\tau_i)$ for the automorphism $\iota$ induced by inversion on $\A_n(G,\tau_i)$. Then for $n\geq 3$ odd, $\CC_n(G,\tau_i)$ is a direct sum of two matrix algebras of dimensions $\left(\dfrac{m^{n-1}\pm 1}{2}\right)^2$.   For $n\geq 4$ and even, $\CC_n(G,\tau_i)$ has $\frac{m^2+3}{2}$ simple summands: $\frac{m^2-1}{2}$ of dimension $m^{2n-4}$ and two others of dimensions $\left(\dfrac{m^{n-2}\pm 1}{2}\right)^2$.
Moreover, the Bratteli diagram for $\cdots\subset\CC_n(G,\tau_i)\subset\CC_{n+1}(G,\tau_i)\subset\cdots$ is given by Figure \ref{fig:bratforc}, where the nodes are labelled by the dimensions of the distinct simple modules.

\end{enumerate}
\end{prop}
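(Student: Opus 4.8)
The plan is to analyze the algebra $\A_n(G,\tau_i)$ directly via its straightening relations, exploiting the fact that $G = \Z_m\times\Z_m$ is abelian and that all three bilinear forms $A_i$ are non-degenerate. The first step is to compute the center. Writing a general element as a linear combination of monomials $m = u_1^{a_1}v_1^{b_1}\cdots u_{n-1}^{a_{n-1}}v_{n-1}^{b_{n-1}}$, one checks that conjugation by a generator $g_j$ multiplies each monomial by a root of unity determined by the bilinear pairing of $g$ with the ``adjacent'' exponents $a_{j\pm 1}, b_{j\pm 1}$. So a monomial is central iff all these pairings vanish, which (since $\alpha$ is non-degenerate) forces $a_{j\pm 1} = b_{j\pm 1} = 0$ for all $j$, i.e. only the odd-indexed factors (or only the even-indexed ones, depending on parity of $n$) can be nonzero; one then also needs compatibility at the boundary. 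A short case analysis on the parity of $n$ yields: for $n$ odd only the identity monomial survives, so $Z$ is $1$-dimensional; for $n$ even the monomials supported on $\{u_2,v_2,u_4,v_4,\dots\}$ that additionally satisfy the end conditions form an $m^2$-dimensional center (indexed by a single ``free'' $\Z_m\times\Z_m$). Since $\A_n(G,\tau_i)$ is semisimple of dimension $m^{2n-2}$ (by the Proposition proved earlier), in the odd case it must be $M_{m^{n-1}}(\Q(q))$, and in the even case it is a sum of $m^2$ blocks, each necessarily of the same dimension $m^{2n-4}$ by a symmetry argument (the center acts transitively on blocks via the $\Z_m\times\Z_m$ grading), giving each block dimension $m^{2(n-2)}$, i.e. $M_{m^{n-2}}(\Q(q))$.

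Next I would establish the Bratteli diagram for part (1). Given the block structure just found, the inclusion $\A_n \hookrightarrow \A_{n+1}$ alternates between ``one simple module restricting to $m^2$ simples'' (odd $n+1$ to even $n$, reading downward) and ``$m^2$ simples each restricting to the one simple'' going the other way; a dimension count ($m^{n-1}$ vs. $m^2\cdot m^{n-2}$, etc.) pins down all multiplicities to be $1$, so Figure~\ref{fig:bratfora} is forced. For part (2), the automorphism $\iota$ is an involution, so $\CC_n = \A_n^\iota$ and one uses the standard fact that the simple summands of a fixed-point algebra of an involution on a multi-matrix algebra are obtained by: (a) keeping one matrix block of half-size for each pair of $\iota$-swapped simple summands, and (b) splitting each $\iota$-fixed block $M_N(\Q(q))$ into two blocks according to the $\pm 1$-eigenspaces of the induced involution on $\Q(q)^N$, of sizes $\lceil N/2\rceil$ and $\lfloor N/2\rfloor$, i.e. dimensions $\big(\tfrac{N\pm1}{2}\big)^2$ when $N$ is odd. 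For $n$ odd, $\A_n \cong M_{m^{n-1}}(\Q(q))$ is a single $\iota$-fixed block with $N = m^{n-1}$ odd, giving exactly the two stated summands. For $n$ even, one must determine how $\iota$ permutes the $m^2$ blocks: since $\iota$ sends the central $\Z_m\times\Z_m$-label $\lambda$ to $-\lambda$, it fixes only the block $\lambda = 0$ and swaps the remaining $m^2 - 1$ in pairs, giving $\tfrac{m^2-1}{2}$ summands of dimension $m^{2n-4}$ (unchanged) plus the splitting of the $\lambda=0$ block, which has size $N = m^{n-2}$ — odd — into two summands $\big(\tfrac{m^{n-2}\pm1}{2}\big)^2$; total $\tfrac{m^2-1}{2} + 2 = \tfrac{m^2+3}{2}$ summands, as claimed.

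Finally, Figure~\ref{fig:bratforc} follows by restricting the Bratteli diagram of part (1) along the fixed-point construction: one needs to track, level by level, how the two ``small'' summands at odd levels merge into and emerge from the $\lambda=0$ block at even levels, and this is again pinned down by dimension counting together with the multiplicity-one structure inherited from Figure~\ref{fig:bratfora}. The main obstacle I anticipate is the bookkeeping in part (2): verifying \emph{precisely} which sign ($+$ or $-$) attaches to which summand and confirming that the induced involution on the relevant $\Q(q)$-matrix block is genuinely a transpose-type involution (so that the eigenspace split is $\lceil N/2\rceil, \lfloor N/2\rfloor$ rather than something else) requires choosing an explicit basis of the simple $\A_n$-module on which $\iota$ acts, and checking that $\iota$ is realized by a symmetric (rather than symplectic) form — here the oddness of $m$ and of $m^{n-1}$ is essential, and the parity combinatorics of $\lceil N/2\rceil$ versus $\lfloor N/2 \rfloor$ across even/odd levels is where errors are most likely to creep in.
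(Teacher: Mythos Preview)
Your overall strategy is sound and genuinely different from the paper's, but there is one real gap. Your item (b) --- that an $\iota$-fixed block $M_N$ always splits into $M_{\lceil N/2\rceil}\oplus M_{\lfloor N/2\rfloor}$ --- is \emph{not} a standard fact: the fixed subalgebra of an order-$2$ inner automorphism of $M_N$ is $M_p\oplus M_q$ where $p,q$ are the eigenspace dimensions of the implementing element $J$, and these can be any pair with $p+q=N$. You correctly flag this as the main obstacle, but the resolution you propose (checking the form is ``symmetric rather than symplectic'') is an anti-automorphism distinction and does not apply here; even knowing $J^2=I$ (which is forced by $N$ odd) does not pin down $p,q$. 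The clean fix is the dimension count you almost have in hand: since $m$ is odd, only the identity monomial is $\iota$-fixed, so $\dim\CC_n=\tfrac{m^{2(n-1)}+1}{2}$; for $n$ odd with $N=m^{n-1}$ this gives $p^2+q^2=\tfrac{N^2+1}{2}$ and $p+q=N$, hence $(p-q)^2=1$, forcing $\{p,q\}=\{\tfrac{N\pm 1}{2}\}$. The same trick, applied to the single $\iota$-fixed block in the $n$ even case, closes that argument too. (Two minor slips: your (a) should say ``same size'', not ``half-size'' --- if $\iota$ swaps two copies of $M_N$, the fixed subalgebra is the diagonal $M_N$; and for $n$ even the central monomials are supported on the \emph{odd}-indexed generators with all exponents equal, not on the even ones.)

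The paper takes a different route entirely. It first uses the Smith-normal-form isomorphism of Proposition~\ref{smith form} to reduce all three twists $\tau_i$ to the diagonal form $\chi(x,y)=q^{x^Ty}$, and then exploits the algebra isomorphism $\A_n(\Z_m\times\Z_m,\chi)\cong \A_n(\Z_m)\otimes\A_n(\Z_m)$, under which $\iota$ becomes conjugation by $J\otimes J$. The structure of $\A_n(\Z_m)$ and of its inversion-fixed subalgebra is then imported from the literature (Jones), so the $J\otimes J$-eigenspace decomposition $(M_+^{\otimes 2}\oplus M_-^{\otimes 2})\oplus(M_+\otimes M_-\oplus M_-\otimes M_+)$ can be read off from the known single-factor split $M=M_+\oplus M_-$. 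Your approach is more self-contained and handles all $\tau_i$ at once without the Smith-form reduction; the paper's is shorter but leans on the known $\Z_m$ case as a black box.
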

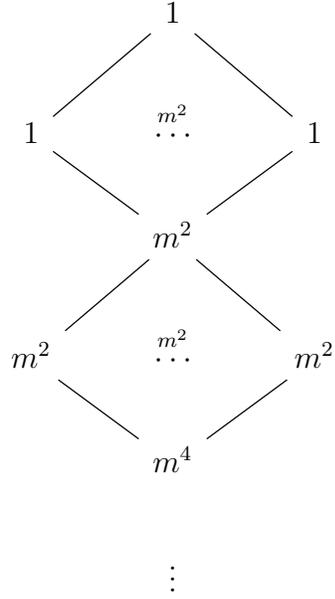
\begin{figure}
    \centering

\begin{tikzcd}
                               & 1 \arrow[ld, no head] \arrow[rd, no head] &                                                                                                  \\
1 \arrow[rd, no head] & \stackrel{m^2}{\cdots}              & 1 \arrow[ld, no head]                         \\
                               & m^2 \arrow[ld, no head]  \arrow[rd, no head] &                           \\
m^2 \arrow[rd, no head] &      \stackrel{m^2}{\cdots}             & m^2 \arrow[ld, no head] \\
                               & m^4                                                                     &                                                                   &     \\ &\vdots     
\end{tikzcd}

    \caption{Bratteli diagram for $\A_n(\Z_m\times\Z_m,\tau_i)$ for $m$ odd.}
    \label{fig:bratfora}
\end{figure}
    \begin{figure}
    \centering

\begin{tikzcd}
                               & 1 \arrow[ld, no head] \arrow[d, "\cdots", no head] \arrow[rd, no head] &                                                                           &                           \\
1 \arrow[rd, no head] & 1 \arrow[d, "\cdots", no head] \arrow[rd, no head]                     & 1 \arrow[ld, no head] \arrow[d, "\cdots"', no head]                     &                           \\
                               & \frac{m^2+1}{2} \arrow[ld, no head] \arrow[d, "\cdots", no head] \arrow[rd, no head] & \frac{m^2-1}{2}\arrow[d, "\cdots"', no head] \arrow[rd, no head] \arrow[ld, no head] &                           \\
\frac{m^2+1}{2} \arrow[rd, no head] & m^2 \arrow[d, "\cdots", no head] \arrow[rd, no head]                     & m^2 \arrow[ld, no head] \arrow[d, "\cdots"', no head]                     & \frac{m^2-1}{2} \arrow[ld, no head] \\
                               & \frac{m^4+1}{2}                                                                      & \frac{m^4-1}{2}                                                                    &\\
                               &             \vdots                                                         &       \vdots                                                                &                       
\end{tikzcd}

    \caption{Bratteli diagram for $\CC_n(\Z_m\times\Z_m,\tau_i)$ for $m$ odd.}
    \label{fig:bratforc}
\end{figure}
    \begin{proof}
    We first note that, by its construction, the isomorphism $\A_n(G, \tau) \cong \A_n(G, \chi)$ of Proposition \ref{smith form} restricts to a bijection on the set $\langle u_j, v_j \rangle$ for each $j$.  It follows that it restricts to a bijection on each $H_j$, hence an isomorphism $\CC_n(G, \tau) \cong \CC_n(G, \chi)$.  Thus, it suffices to prove the proposition for $\tau(x,y) = \chi(x,y) := q^{x^Ty}$.
    
    In this case, we have an isomorphism $\rho:\A_n(\Z_m \times \Z_m), \chi) \to \A_n(\Z_m) \ot \A_n(\Z_m)$ where $\rho(x,y) = q^{xy}$ given by mapping $u_j$ to the $j$-th group generator in the first tensor factor and $v_j$ to the one in the second tensor factor. The structure of $\A_n(\Z_m)$ is well-known (see eg., \cite{jonesstatmech}) from which we can derive the structure of $\A_n(\Z_m \times \Z_m), \chi)$.  Indeed, the Bratteli diagram of $\A_n(\Z_m)$ can be easily worked out: it has the same general shape as in Figure \ref{fig:bratfora} except with $m$ summands for $n$ even and with $\dim\A_n(\Z_m)=m^{n-1}$.
    In particular the simple direct summands of $\A_n(\Z_m \times \Z_m), \chi)$ are of the form $[\A_n(\Z_m)]_i\ot[\A_n(\Z_mo)]_j$  where $[\A_n(\Z_m)]_i$ denotes the $i$th direct summand of $\A_n(\Z_m)$. In particular when $n$ is odd,  $\A_n(\Z_m \times \Z_m), \chi)\cong M_{m^{n-1}}(\Q(q))$ is simple and has a 1-dimensional center whereas for $n$ even $\A_n(\Z_m \times \Z_m), \chi)$ has an $m^2$ dimensional center and decomposes as a direct sum of $m^2$ simple algebras of dimension $m^{2(n-2)}$.

    We compute that $\dim \CC_n(\Z_m \times \Z_m, \chi) = \frac{m^{2(n-1)} +1}{2}$ as follows. Any fixed point of $\iota$ has the form $\sum_{\mathbf{x},\mathbf{y}\in (\Z_m)^{n-1}} c_{\mathbf{x},\mathbf{y}} (a_\mathbf{x}b_\mathbf{y}+a_\mathbf{-x}b_\mathbf{-y})$ where  $a_\mathbf{x}=u_1^{x_1}\cdots u_{n-1}^{x_{n-1}}$ and $b_\mathbf{y}=v_1^{y_1}\cdots v_{n-1}^{y_{n-1}}$. Thus we see that the $\frac{m^{2(n-1)} -1}{2}$ linearly independent binomials of the form $a_\mathbf{x}b_\mathbf{y}+a_\mathbf{-x}b_\mathbf{-y}$ together with $1$ form a basis for  $\CC_n(\Z_m \times \Z_m, \chi)$.

    


    We first consider $n$ odd. Define $d(n)=m^{\frac{n-1}{2}}$. Since $\A_n(\Z_m,\rho)\cong M_{d(n)}(\Q(q))$ is simple of dimension $m^{n-1}$ it has a unique irreducible faithful module $M$ of dimension $d(n)$.  Now the action of the automorphism $\iota$ on $\A_n(\Z_m)$ is an inner automorphism, and hence corresponds to conjugation by some diagonal matrix $J$.  Without loss of generality we may assume $J$ has non-zero entries $\pm 1$ (rescaling if necessary). 
    From \cite{jonesstatmech} we know the structure of $\CC_n(\Z_m,\rho)$: for $n$ odd the restriction of $M$ to $\CC_n(\Z_m,\rho)$ decomposes as a direct sum of two irreducible modules $M_+$ and $M_-$, of dimensions $\frac{d(n)+1}{2}$ and $\frac{d(n)-1}{2}$.  Notice that $\iota(J)=J$ so that $M_+$ and $M_-$ are simply the two eigenspaces of $J$ acting on $M$.   
    
    Now since $\A_n(\Z_m\times\Z_m,\chi)\cong \A_n(\Z_m)\otimes \A_n(\Z_m)$, the action of $\iota$ is (in some basis) conjugation by $J\otimes J\in\A_n(\Z_m)\otimes \A_n(\Z_m)$.  Moreover, as a $\CC_n(\Z_m\times\Z_m,\chi)$ module $M\otimes M$ decomposes as the direct sum of the two $J\ot J$ eigenspaces.  These are precisely $M_+^{\ot 2}\oplus M_-^{\ot 2}$ (the +1 eigenspace) and $M_+\ot M_-\oplus M_-\ot M_+$ (the -1 eigenspace) as vector spaces.  By the double commutant theorem the faithful $\CC_n(\Z_m\times\Z_m,\chi)$-module $M\ot M$ decomposes into a direct sum of two simple modules, which are precisely the $\pm 1$-eigenspaces for $J\ot J$.  Since $\left(\frac{d(n)+1}{2}\right)^2+\left(\frac{d(n)-1}{2}\right)^2=\frac{d(n)^2+1}{2}$ and $2\left(\frac{(d(n)+1)(d(n)-1)}{4}\right)=\frac{d(n)^2-1}{2}$ the dimensions are as stated.
    
    The case of $n$ even is similar, but slightly more complicated since $\A_n(\Z_m)$ is not simple for $n$ even. The Bratteli diagram is found in \cite{jonesstatmech}, and the technique is straight-forward so we sketch it here.  Restricting the simple $\frac{d(n+1)+1}{2}$-dimensional $\CC_{n+1}(\Z_m)$-module $M_+$ to $\CC_n(\Z_m)$ decomposes as $N_+\oplus\bigoplus_{j=1}^{\frac{m-1}{2}}L_j$ and restricting the simple $\frac{d(n+1)-1}{2}$-dimensional $\CC_{n+1}(\Z_m)$-module $M_-$ to $\CC_n(\Z_m)$ decomposes as $N_-\oplus\bigoplus_{j=1}^{\frac{m-1}{2}}L_j$ where $\dim(L_j)=d(n)$ and $\dim(N_{\pm})=\frac{d(n)\pm 1}{2}$.  Indeed, since on $\A_{n}(\Z_m)\cong \bigoplus_{j=0}^{m-1}M_{d(n)}(\Q(q))$ the automorphism  $\iota$ either interchanges pairs of simple summands or restricts to an automorphism on them.  In this case only one simple summand is $\iota$-invariant while $\iota$ induces isomorphisms between the other $\frac{m-1}{2}$ pairs.  The $\iota$-invariant factor then splits as a as direct sum of two $\CC_n(\Z_m)$-modules $N_\pm$ of dimensions $\frac{d(n)\pm 1}{2}$, and we get one distinct $\CC_n(\Z_m)$-module $L_j$ for each $\iota$-orbit of size $2$.  Now we apply the restriction technique above to the two simple $\CC_n(\Z_m\times\Z_m,\chi)$-modules $M_+^{\ot 2}\oplus M_-^{\ot 2}$ and $M_+\ot M_-\oplus M_-\ot M_+$  together with the restriction of $\iota$ to the simple direct summands of $\A_n(\Z_m\times\Z_m,\chi)$ to resolve the count of components, from which the result is derived.

    \end{proof}

  We now return to the problem of finding solutions $$r=\gamma\sum_{0\leq j,k \leq p-1} f(j,k)u^jv^k$$ to the $\A(\Z_p\times\Z_p,\tau_i)$-YBE for $i=1,2$ and $3$.

  We could not find any non-trivial unitary solutions that do not factor as $f(j,k)=f_u(j)f_v(k)$ after applying the symmetries of Subsection \ref{r symmetries}, and can verify computationally that all solutions factor as products of Gaussian-type solutions in the case for $p=3$. Thus we focus on such solutions.  All of the solutions that follow will have finite braid group image when properly normalized to be unitary, which can be easily verified using the finiteness of the Gaussian representation images.  The eigenvalues of $r=\sum_{0\leq j,k\leq p-1}q^{j^2\varepsilon xk^2}u^jv^k$ for $q=e^{2\pi i/p}$ and $\varepsilon=\pm 1$ in any faithful representation of $\A_2(G,\tau_i)$ are: 
  $$\lambda_{\varepsilon,x}(s,t)= \left(\sum_{j}q^{j^2+sj} \right) \left(\sum_k q^{\varepsilon xk^2+tk}\right).$$  Up to an overall normalization factor, the eigenvalues and their multiplicities can be computed using standard Gaussian quadratic form techniques, and only depend on the sign $\pm$ and whether $-1$ and $x$ are squares or non-squares modulo $p$.  We have that the multi-set $[\lambda_{\varepsilon,x}(s,t)]$ has:
  \begin{enumerate}\item $1$ with multiplicity $1$ and $e^{2\pi i j/p}$ with multiplicity $p+1$ for each $1\leq j \leq p-1$ when $(\varepsilon,\legendre{-1}{p},\legendre{x}{p})\in\{(1,-1,1),(-1,-1,-1),(1,1,-1),(-1,1,-1)\}$ and
  \item $1$ with multiplicity $2p-1$ and $e^{2\pi i j/p}$ with multiplicity $p-1$ for each $1\leq j \leq p-1$ when $(\varepsilon,\legendre{-1}{p},\legendre{x}{p})\in\{(1,-1,-1),(-1,-1,1),(1,1,1),(-1,1,1)\}$.
\end{enumerate}
   
\subsubsection{Elliptic Symmetric Case}
First consider the case $A_1$.  
We can deduce some solutions $$t(u,v)=\sum_{0\leq j,k\leq p-1} F(j,k)u^jv^k$$ to the $\A_n(\Z_p\times\Z_p,\tau_i)$-YBE from the Gaussian solutions.  Indeed, if $f,h:\Z_p\rightarrow \C$ and are such that $r(u)=\sum_{j=0}^{p-1}f(j)u^j$ and $s(v)=\sum_{j=0}^{p-1}h(j)v^j$ are solutions to the $\A_n(\Z_p)$-YBE then setting $t(u,v)=r(u)s(v)$ we can easily verify: $$t_1t_2t_1=(r_1s_1)(r_2s_2)(r_1s_1)=(r_1r_2r_1)(s_1s_2s_1)=t_2t_1t_2$$ since $r_i:=r(u_i)$ commutes with $s_i:=s(v_j)$.  If both $f$ and $h$ correspond to Gaussian solutions, we may rescale $u$ and $v$ independently followed by complex conjugation to assume that $f(j)=q^{j^2}$ and $h(k)=q^{\pm k^2}$.  The choice of sign indeed gives two distinct solutions.  The additional symmetry that we have not used comes from the group of $\tau_1$-invariant $G$-automorphism, i.e., $\{X\in\GL_2,(\Z_p): X^TA_1X=A_1\}$, which is a group of order $2(p-1)$ in this case.

\subsubsection{Skew-Symmetric Case}
Next we consider the case $A_2$.  
Suppose that our solution  $$t(u,v)=\sum_{0\leq j,k\leq p-1} F(j,k)u^jv^k$$ factors as $t(u,v)=r(u)s(v)$ where $r(u)=\sum_{j=0}^{p-1}f(j)u^j$ and $s(v)=\sum_{j=0}^{p-1}h(j)v^j$ are solutions to the $\A_n(\Z_p)$-YBE.  Again, setting $r_i=r(u_i)$ and $s_i=s(v_i)$ we observe that $[r_1,r_2]=1$ and $[s_1,s_2]=1$, so that $t_1t_2t_1=r_1s_1r_2s_2r_1s_1=(s_1r_2s_1)(r_1s_2r_1)$.  From this we deduce that we should take $r(u)=s(u)$ and $r(v)=s(v)$, i.e. $h=f$ so that $t(u,v)=r(u)r(v)$.  Now we can use symmetry to choose $f(j)=h(j)=q^{j^2}$.  In this case the group of automorphisms of $\Z_p\times\Z_p$ that preserve $\alpha_2$ is $\SL_2(\Z_p)$, a group of order ($p^2-p)(p+1)$.

\subsubsection{Hyperbolic Symmetric Case}
As the details are similar to the elliptic symmetric case we are content to provide the factored solution $$t(u,v)=\sum_{0\leq j,k\leq p-1}q^{j^2\pm xk^2}u^jv^k.$$  It is an easy exercise to show that this is the unique factorizable solution up to symmetries. The group of automorphisms of $\A(G,\tau_3)$ that preserve  preserve $\alpha_3$ has order $2(p+1)$.

\subsection{Non-commutative cases}
To illustrate our methods for non-abelian groups we first apply them to the case of the symmetric group $S_3$ and the algebra $\A_n(Q_8)$ from the introduction.
\subsubsection{Symmetric group $S_3$}
For $S_3$ the bihomomorphisms $\alpha:S_3\times S_3\rightarrow \Z_m$ are determined by the abelianization $\Z_2\times\Z_2\rightarrow \Z_m$ so that we may take $m=2$. 
In particular we have the following description of $\A_n(S_3,\tau)$ for the non-trivial choice $\alpha((1\;2),(1\;2))=1$.  We take generators $u=(1\;2)$ and $v=(1\;2\;3)$ for $S_3$ and corresponding generators of $\A_n(S_3,\tau)$  $u_1,v_1,\ldots,u_{n-1},v_{n-1}$ with relations:
\begin{itemize}
    \item $u_iv_i=v_i^2u_i$ and $u_i^2=v_i^3=1$ ($S_3$ relations) and
    \item $u_iu_{i+1}=-u_{i+1}u_i$ and $v_iv_j=v_jv_i$ for all $i,j$, and 
    \item $u_iv_j=v_ju_i$ for $i\neq j$.
\end{itemize}
We seek (invertible) solutions $r=\gamma(1+au+bv+cv^2+duv+euv^2)\in\C[S_3]$ to the $\A(S_3,\tau)$-YBE, where $\gamma$ is a normalization factor chosen to give $r$ finite order.
Appendix I contains the details of the computation, the upshot of which is that $b=c=0$ is a consequence of invertibility and to have solutions $r$ that are unitary with respect to the standard $*$-operation we should take $\gamma=\frac{1}{1+\im}$ and $(a,d,e)=(\im x,\im y,\im z)$ with $(x,y,z)\in\R^3$ on the intersection of the surface given by $xy+xz+yz=0$ with the unit sphere $x^2+y^2+z^2=1$.  Since $(x+y+z)^2=1$ modulo the ideal generated by these two polynomials we conclude that the solutions are the points on the intersection of the two planes $(x+y+z)=\pm 1$ with the unit sphere. 

In all cases we find that $r^4=1$, with eigenvalues $1,-\im$.  This suggests that this representation is related to the Ising theory, see \cite{FRW}.

\subsubsection{Quaternionic Algebra $\A_n(Q_8)$}
Recall the algebra $\A_n(Q_8)$ described in the introduction, generated by $u_i,v_i$ satisfying:
\begin{enumerate}
\item $u_i^2=v_i^2=-1$ for all $i$,
\item $[u_i,v_j]=-1$ if $|i-j|<2$,
\item $[u_i,u_j]=[v_i,v_j]=1$,
\item $[u_i,v_j]=1$ if $|i-j|\geq2$.

\end{enumerate}
From the relations one deduces that for each $i$ the pair $u_i,v_i$ generates a group isomorphic to $Q_8$.  Notice, however, that $\langle u_i,v_i\rangle \cap \langle u_i,v_i\rangle=\{\pm 1\}$ so that $\A_n(Q_8)$ is not a twisted tensor product of group algebras: indeed it is not $\C[Q_8]^{\ot n-1}$ as a vector space.  The algebra is closely related to group algebras, in at least two ways.  Firstly, suppose that $Q_8=\langle u,v\rangle$ where $uv=zvu$ with $u^2=v^2=z$ central of order $2$.  Then we may define the quotient $\mathcal{T}=\C[Q_8]/\langle z+1\rangle$ and then $\A(Q_8)$ above is a twisted tensor product of $n-1$ copies of $\T$ with the tensor product twist given as above, determined by $\tau(u,v)=-1$ and $\tau(u,u)=\tau(v,v)=1$ since $u_iv_{i+1}=-1v_{i+1}u_i$.

Alternatively, we can consider the twisted group algebra $\C^\nu[\Z_2\times\Z_2]$ associated with the cocycle $\nu\in Z^2(\Z_2\times \Z_2,U(1))$ defined by:
\begin{itemize}
    \item $\nu((1,0),(0,1))=-\nu((0,1),(1,0))=1$
    \item $\nu((1,0),(1,0))=\nu((0,1),(0,1))=-1$
\end{itemize}
with multiplication in $\C^\nu[\Z_2\times\Z_2]$ given by $g\star_\nu h=\nu(g,h)gh$ for $g,h\in\Z_2\times\Z_2$.  Then
$$\A_n(Q_8)=\C^\nu[\Z_2\times\Z_2]\ot_\tau\C^\nu[\Z_2\times\Z_2]\ot_\tau\cdots\ot_\tau \C^\nu[\Z_2\times\Z_2],$$ where $\tau$ is the twisting corresponding to the relations above.

We look for $\A(Q_8)$-YBOs of the form $r=1+au+bv+cuv$.  We find $8$ non-trivial solutions namely $a,b,c\in\{\pm \frac{1}{2}\}$, normalized to a unitary solution.  These are all related by symmetry since we may rescale $a\rightarrow -a$ and $b\rightarrow -b$ independently, permute the $a,b,c$ freely using the fact that $\tau$ is invariant under permuting $u,v,uv$ and inversion corresponds to simultaneously changing all signs.  The Magma code is found in Appendix A.

\section{Categorical Connections}  \label{categorical connections}
The class of weakly integral modular categories, i.e. those for which $\FPdim(\CC)\in\Z$ is not well-understood.  However, a long-standing question \cite[Question 2]{ENO2} asks if the class of weakly integral fusion categories coincides with the class of weakly group-theoretical fusion categories, i.e., those that are Morita equivalent to a nilpotent fusion category.  Recently Natale \cite{Nat} proved that any weakly group-theoretical modular category is a $G$-gauging of either a pointed modular category (all simple objects are invertible) or a Deligne product of a pointed modular category and an Ising-type modular category \cite[Appendix B]{DGNO}.  These latter categories are well-known to have property $F$, which reduces the verification of the property $F$ conjecture for weakly integral braided fusion categories to verifying that $G$-gauging preserves property $F$ and that weak integrality is equivalent to weak group-theoreticity.   

The difficulty with verifying property $F$ for a given category is that one rarely has a sufficiently explicit description of the braid group representations $\rho_X$ associated with an object $X\in\CC$.  The braiding $c_{X,X}$ on $X\otimes X$ provides a map $\C\B_n\rightarrow \End(X^{\ot n})$ which then acts on each $\Hom(Y,X^{\ot n})$ for simple objects $Y$ by composition, but an explicit basis for $\Hom(Y,X^{\ot n})$ is lacking.  In all cases where the property $F$ conjecture has been verified for a weakly integral braided fusion category \cite{ERW,NR,RW,jones86,FLW} the first step is a concrete description of the centralizer algebras $\End(X^{\ot n})$, and the corresponding modules $\Hom(Y,X^{\ot n})$ which are obtained by studying a specific realization of $\CC$, as a subquotient category of representations of a quantum group, for example.  From this description one extracts a sufficiently explicit $\B_n$ representation to facilitate the verification of property $F$.

One approach to a uniform proof of (one direction of) the property $F$ conjecture is to understand the connection between the centralizer algebras of pointed modular categories and those of its $G$-gaugings.  Pointed modular categories are in one to one correspondence with metric groups, i.e., pairs $(A,Q)$ where $A$ is a finite abelian group and $Q$ is a non-degenerate quadratic form on $A$. We denote by $\CC(A,Q)$ the corresponding modular category.  Pointed modular categories and their products with Ising-type categories are well-known to have property $F$ \cite{NR}.  If it could be proved that $G$-gauging preserves property $F$ then we could reduce this direction of the property $F$ conjecture to \cite[Question 2]{ENO2}.

For a general mathematical reference on $G$-gaugings, see \cite{cuiplavniketal}, the notation of which we will adopt here. Let $A$ be an abelian group, and $Q$ a non-degenerate quadratic form on $A$, and $\CC(A,Q)$ the corresponding pointed modular category, with twists given by $\theta_a=Q(a)$ and braiding by $c_{a,b}=\beta(a,b)\sigma$ where $\sigma$ is the usual flip map and $\beta(a,b):=Q(a+b)-Q(a)-Q(b)$.  A $G$ symmetry of $\CC(A,Q)$ is a group homomorphism $\rho:G\rightarrow \Aut_\ot^{br}(\CC(A,Q))\cong O(A,Q)$. Provided certain cohomological obstructions vanish one may construct (potentially several) modular categories by \emph{gauging} the $G$ symmetry. 
In the case of an elementary abelian $p$-group for $p$ an odd prime all of the obstructions vanish by \cite[Theorem 6.1]{EtGa}.

We expect there to be a connection between the algebras $\A_n(G,\tau)$ described above and the $H$-gaugings of pointed modular categories, i.e., categories $\CC(G,Q)^{\times, H}_H$ where $H\subset \Aut_\ot^{br}(\CC(G,Q))$.  Indeed, in the case $G=\Z_p$ and $H=\Z_2$ acting by inversion these categories are called $p$-\emph{metaplectic} and we we have the following, using results of \cite{RWQT,GRRTohoku,jonesstatmech,ACRW} and some careful adjustment of parameters:
\begin{theorem}
Let $\Z_2$ act on $\CC:=\CC(\Z_p,Q)$ by inversion, and let $\D=\CC^{\times,\Z_2}_{\Z_2}$ be any of the corresponding gaugings, and $X$ a simple object of dimension $\sqrt{p}$.  Then $$\End(X^{\ot n})\cong \langle u_1+u_1^{-1},\ldots,u_{n-1}+u_{n-1}^{-1}\rangle\subset\A_n(\Z_p).$$
\end{theorem}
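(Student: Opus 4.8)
\emph{Proof plan.} The plan is to prove the isomorphism by computing both sides explicitly and matching their Wedderburn decompositions; if one additionally wants the isomorphism to respect the natural inclusions $\End(X^{\ot n})\subset\End(X^{\ot n+1})$ and $\langle e_1,\dots,e_{n-1}\rangle\subset\langle e_1,\dots,e_n\rangle$ it suffices to check that the associated Bratteli diagrams agree. The key observation making this feasible is that, although $\D$ carries a braiding, the algebra $\End(X^{\ot n})$ is determined purely by the fusion rules of $\D$ via $\dim\End(X^{\ot n})=\sum_{Y}\bigl(\dim\Hom(Y,X^{\ot n})\bigr)^2$, and the $p$-metaplectic fusion rules are the same for every gauging $\D=\CC^{\times,\Z_2}_{\Z_2}$, so "any of the corresponding gaugings" is harmless.

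First I would recall the structure of the $p$-metaplectic category $\D$ from \cite{RWQT,GRRTohoku,ACRW}: its simple objects are $\mathbf 1$, one further invertible $Z$, two-dimensional objects $X_1,\dots,X_{(p-1)/2}$, and two objects $Y_\pm$ of dimension $\sqrt p$ (the images, under the $\Z_2$-equivariantization, of the spinor in the twisted sector of the $\Z_2$-crossed extension of $\CC(\Z_p,Q)$), with fusion $Y_\pm\ot Y_\pm\cong\mathbf 1\oplus\bigoplus_j X_j$, $Y_\pm\ot Y_\mp\cong Z\oplus\bigoplus_j X_j$, and $X_j\ot Y_\pm\cong Y_+\oplus Y_-$. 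Taking $X$ to be one of $Y_\pm$, an easy induction on $n$ then decomposes $X^{\ot n}$ into simples and yields the Bratteli diagram of the tower $\cdots\subset\End(X^{\ot n})\subset\End(X^{\ot n+1})\subset\cdots$: for $n$ odd one gets a sum of two matrix algebras of sizes $\tfrac{p^{(n-1)/2}\pm1}{2}$, and for $n$ even a sum of two matrix algebras of sizes $\tfrac{p^{(n-2)/2}\pm1}{2}$ together with $\tfrac{p-1}{2}$ copies of $M_{p^{(n-2)/2}}(\C)$. (One also uses here that the natural map $\C\B_n\to\End(X^{\ot n})$ is surjective, so that the centralizer tower is the one cut out by the braidings on $X$; this is part of the metaplectic analysis in the cited references.)

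On the other side, one analyzes $\B_n:=\langle e_1,\dots,e_{n-1}\rangle\subset\A_n(\Z_p)\ot\C$, where $e_i:=u_i+u_i^{-1}=u_i+u_i^\ast$ is self-adjoint for the $\ast$-structure of the previous subsection; being a $\ast$-subalgebra of a finite-dimensional $C^\ast$-algebra, $\B_n$ is semisimple. I would first identify $\B_n$ with the fixed-point subalgebra $\CC_n(\Z_p):=\A_n(\Z_p)^{\iota}$ for the inversion automorphism $\iota(u_i)=u_i^{-1}$ of Subsection \ref{inversion}: the inclusion $\B_n\subseteq\CC_n(\Z_p)$ is immediate, and the reverse inclusion follows by induction on $n$, the point being that the relation $u_iu_{i+1}=q^2u_{i+1}u_i$ lets one solve for the $\iota$-symmetric binomials $m+\iota(m)$ in terms of products of the $e_i$ (for example $u_1u_2+u_1^{-1}u_2^{-1}$ and $u_1u_2^{-1}+u_1^{-1}u_2$ are recovered from $e_1e_2$ and $e_2e_1$ because $q^2\neq q^{-2}$). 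The Wedderburn structure of $\CC_n(\Z_p)$ is then read off from that of $\A_n(\Z_p)$ and the known behaviour of $\iota$ on it, exactly as in the proof of the preceding Proposition and in \cite{jonesstatmech}: for $n$ odd $\A_n(\Z_p)\cong M_{p^{(n-1)/2}}(\C)$ and $\iota$ is conjugation by a diagonal involution whose $\pm1$-eigenspaces have dimensions $\tfrac{p^{(n-1)/2}\pm1}{2}$; for $n$ even $\A_n(\Z_p)$ is a direct sum of $p$ copies of $M_{p^{(n-2)/2}}(\C)$ permuted by $\iota$ with one fixed summand and $\tfrac{p-1}{2}$ transposed pairs, the fixed summand splitting as above. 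This produces precisely the list of matrix sizes found for $\End(X^{\ot n})$.

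Comparing the two computations, the multisets of simple component dimensions coincide for every $n$, and the two Bratteli diagrams coincide as well, so $\End(X^{\ot n})\cong\B_n$ (compatibly with the inclusions, since a tower of finite-dimensional semisimple algebras is determined up to isomorphism by its Bratteli diagram). The main obstacle I anticipate is not any single hard step but the bookkeeping needed to align parameters: matching the quadratic form $Q$ and the chosen root of unity on the metaplectic side with the bicharacter $\alpha(x,y)=2xy$ and the root of unity $q$ in the Gaussian presentation of $\A_n(\Z_p)$, and verifying that neither the choice of which spinor is called $X$ nor the choice among the several gaugings changes the answer (the latter being automatic once one notes all these gaugings share the same fusion rules). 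The one genuinely external input is the fact, due to \cite{jonesstatmech}, that the two eigenspaces of the relevant diagonal involution on $\A_n(\Z_p)$ have dimensions differing by exactly one — this is what forces the "$\pm1$" in the matrix sizes on the $\A_n(\Z_p)$ side and does not follow from semisimplicity alone.
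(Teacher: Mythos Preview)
The paper does not give a proof of this theorem; it is stated as a consequence of the cited results \cite{RWQT,GRRTohoku,jonesstatmech,ACRW} together with ``some careful adjustment of parameters.'' Your plan---compute the Wedderburn/Bratteli structure on each side from the $p$-metaplectic fusion rules on the one hand and from the known structure of $\A_n(\Z_p)$ and its inversion-fixed subalgebra on the other, then match---is the intended argument, and it is exactly the method the paper spells out in detail for the analogous (there only conjectural) $\Z_p\times\Z_p$ statement that follows: compare the Bratteli computations in the Proposition on $\CC_n(\Z_m\times\Z_m,\tau_i)$ (which explicitly imports the structure of $\CC_n(\Z_m)$ from \cite{jonesstatmech}) with the fusion Bratteli diagram of Figure~\ref{fig:brat}.

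Two minor points. Your parenthetical about surjectivity of $\C\B_n\to\End(X^{\ot n})$ is unnecessary for the statement: the tower $\End(X^{\ot n})\hookrightarrow\End(X^{\ot n+1})$ via $f\mapsto f\ot\id_X$ and its Bratteli diagram are determined purely by the fusion rules, independently of the braiding. The one place your sketch is genuinely thin is the identification of $\langle e_1,\dots,e_{n-1}\rangle$ with the full fixed subalgebra $\A_n(\Z_p)^\iota$; your $2\times 2$ linear-algebra computation with $e_1e_2$ and $e_2e_1$ is the right mechanism, but the full inductive argument (or the appeal to \cite{jonesstatmech}, which the paper itself invokes for precisely this structure) is what actually closes that step.
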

In fact, this result is key to verifying the property $F$ conjecture for $p$-metaplectic categories.

A similar relationship exists between a $\Z_3$-gauging of the so-called \emph{three fermion} theory $\CC(\Z_2\times\Z_2,Q)$ where $Q(x)=-1$ for $x\neq (0,0)$ and the algebra $\A_n(Q_8)$ described above. In this case $ \CC(\Z_2\times \Z_2,Q)^{\times,\Z_3}_{\Z_3}\cong SU(3)_3$ for one choice of $\Z_3$-gauging, where the action of $\Z_3$ at the level of object is given by cyclic permutation of the three non-trivial simple objects (see \cite{cuiplavniketal}).  Now for a generating $2$-dimensional object $X$ it is shown in \cite{RQT} that the subalgebra $\CC_n(Q_8)$ of $\A_n(Q_8)$ generated by $(u_i+v_i+u_iv_i)$ for $1\leq i\leq n-1$ is isomorphic to $\End(X^{\ot n})$, which is also isomorphic to a quotient of the Hecke algebra specialization $\cH_n(3,6)$.  The reader will also notice that the $\Z_3$ action on $Q_8$ given by cyclic permutation of $u,v$ and $uv$ lifts to an automorphism of $\A_n(Q_8)$ and $\CC_n(Q_8)$ is the fixed point subalgebra.  Finally, we remark that the image of the braid group representation on $\End(X^{\ot n})$ is finite--it factors through the representations found above: $\sigma_i\mapsto (1+u_i+v_i+u_iv_i)$.

This inspires the following:
\begin{principle}
If $G\subset \Aut(A,Q)$ is a gaugeable action on $\CC(A,Q)$ then there is a (quotient of an) iterated twisted tensor product $\A_n(A,\tau)$ of $\C[A]$ and an object $X\in\CC(A,Q)^{\times,G}_G$ so that $\End(X^{\ot n})$ is isomorphic to the fixed point subalgebra $\CC_n(A,\tau)$ of the automorphism induced by the action of $G$ on $A$.  Moreover there is a $\A(A,\tau)$-YBO $r$ supported in $\CC_n(A,\tau)$ such that the $\B_n$ representation on $\End(X^{\otimes n})$ factors through the $B_n$ representation defined by $r$.
\end{principle}

We do not have a general proof of this principle for all groups.  In the case of $\Z_p\times\Z_p$ with $\Z_2$ acting by inversion we give some compelling evidence for this principle.

Now suppose that 
 $|A|=m=2k+1$ is odd and $\rho:\Z_2\rightarrow \Aut_\ot^{br}(\CC(A,Q))$ is the action by inversion.  The $\Z_2$-extensions are Tambara-Yamamgami categories $TY(A,\chi,\pm)$ \cite{TY}, and their equivariantizations are found in \cite{GNN} (see also \cite{IzumiII}).
There are two distinct $\Z_2$-gaugings $\mathcal{D}_\pm := \CC(A,Q)_{\mathbb{Z}_2}^{\times, \mathbb{Z}_2}$ of the inversion action $\rho$.  Each modular category $\mathcal{D}_\pm$ has dimension $4 \left| A \right|$.  It has the following simple objects:
\begin{enumerate}
\item[] two invertible objects, $\mathbf{1} =X_+$ and $X_-$,
\item[] $\frac{m-1}{2}$ two-dimensional objects
$Y_a, \, a\in A-\{0\}$ (with $Y_{-a} =Y_a$)
\item[] two $\sqrt{m}$-dimensional objects $Z_l$, $l \in \mathbb{Z}_2$.
\end{enumerate}

The fusion rules of $\mathcal{D}_\pm$ are given by:
\begin{align*}
 X_- \ot X_- &= X_+,     & X_\pm \ot Y_a &= Y_a,    & X_+ \ot Z_l &=Z_l,  \\
 X_- \ot Z_l &= Z_{l+1}, & Y_a \ot Y_b &= Y_{a+b} \oplus Y_{a-b}, & Y_a \ot Y_a &= X_+\oplus X_- \oplus Y_{2a},\\
 Y_a\ot Z_l &= Z_0 \oplus Z_1, & Z_l \ot Z_l  &= X_+ \oplus \left( \oplus_a\, Y_a\right), & Z_l \ot Z_{l+1}  &= X_- \oplus \left(\oplus_a\, Y_a \right),
\end{align*}
where $a,\, b\in A \; (a\neq b)$ and $l\in \mathbb{Z}_2$.
All objects of $\mathcal{D}_\pm$ are self-dual.  Here the addition $a+b$ takes place in $A$.  We see that $X_-$ must be a boson, in the sense that the subcategory $\langle X_-\rangle\cong\Rep(\Z_2)$ as a braided fusion category.  Indeed, as $\D_\pm$ is a non-degenerate braided fusion category it is faithfully $\Z_2$-graded with the trivial component having the $\frac{m+1}{2}$ simple objects $Y_a,X_{\pm}$, and non-trivial component having the two simple objects $Z_l$.  

In particular the algebras $\End(Z_0^{\ot n})\subset\End(Z_0^{\ot n+1})$ have the Bratteli diagram of Figure \ref{fig:brat}, where we have labeled the objects $Y_a$ by an arbitrary choice $Y_i$ for $1\leq i\leq k$.

\begin{figure}
    \centering

\begin{tikzcd}
                               & Z_0 \arrow[ld, no head] \arrow[d, "\cdots", no head] \arrow[rd, no head] &                                                                           &                           \\
\textbf{1} \arrow[rd, no head] & Y_1 \arrow[d, "\cdots", no head] \arrow[rd, no head]                     & Y_k \arrow[ld, no head] \arrow[d, "\cdots"', no head]                     &                           \\
                               & Z_0 \arrow[ld, no head] \arrow[d, "\cdots", no head] \arrow[rd, no head] & Z_1 \arrow[d, "\cdots"', no head] \arrow[rd, no head] \arrow[ld, no head] &                           \\
\textbf{1} \arrow[rd, no head] & Y_1 \arrow[d, "\cdots", no head] \arrow[rd, no head]                     & Y_k \arrow[ld, no head] \arrow[d, "\cdots"', no head]                     & X_{-} \arrow[ld, no head] \\
                               & Z_0                                                                      & Z_1                                                                       &                          
\end{tikzcd}

    \caption{Bratteli diagram for $\CC(A,Q)^{\times,\Z_2}_{\Z_2}$ for $|A|$ odd.}
    \label{fig:brat}
\end{figure}

The categories $\D_\pm$ described above for the group $G=\Z_p\times\Z_p$ were explored in \cite{GNN}, and found to be non-group-theoretical in one case and group-theoretical in the other.  For the case $p=3$, the group-theoretical cases are equivalent to $\Rep(D^\om S_3)$  where $\om$ is a $3$-cocycle on $S_3$.  Up to equivalence there is one non-trivial choice for $\om$.

We expect that:
\begin{enumerate}
    \item $\End(Z^{\ot n}) \cong C_n(\Z_p\times\Z_p,\tau)$ for some choice of $\tau$.
    \item Under the above isomorphisms the image of the braid group generators are described by the $\A(\Z_p\times\Z_p,\tau)$-YBOs determined above.
\end{enumerate}

The two pieces of evidence are as follows:

\begin{enumerate}
    \item The Bratteli diagrams for  $\End(Z_0^{\ot n})$ and $\CC_n(\Z_p\times\Z_p,\tau_i)$ coincide and 
    \item The eigenvalue profile of $c_{Z_0,Z_0}$ and $\sum_{j,k}q^{j^2\pm xk^2}u^jv^k$ coincide, for some choice of $\pm x$ where $x$ is either $1$ or any non-square modulo $p$.
\end{enumerate}

The $S$ and $T$ matrices of all 4 of these categories are given in \cite{GNN}, as they are equivalent to $\Z_2$-equivariantizations of Tambara-Yamagami categories.  From \cite[Prop. 2.3]{BDGRTW} we may deduce the eigenvalues of the braiding for the object $Z_0$ of dimension $p$.

\subsubsection{A special case: $p=3$} Let $q=e^{2\pi \im/3}$.
The two group-theoretical categories $\Rep(D^\om S_3)$ can be obtained by gauging the $\Z_2$ inversion symmetry on $\CC(\Z_3\times\Z_3,Q_1)$ where $Q_2(x,y)=q^{x^2-y^2}$ is hyperbolic.  For the elliptic quadratic form $Q_2(x,y)=q^{x^2+y^2}$ the two inequivalent $\Z_2$-gaugings are non-group-theoretical.  Each of these categories can be tensor generated by a simple object $Z$ of dimension $3$.  The two group theoretical-categories $\Rep(D^\om G)$ have property $F$ (\cite{ERW}), but it is currently open whether the non-group-theoretical cases have property $F$.

On the other hand, we can have completely determined all unitary solutions to the $\A(\Z_3\times\Z_3,\tau)$-YBE for the bicharacters $\tau$ associated with the $3$ matrices $A_1,A_2$ and $A_3$, up to the usual symmetries in Example \ref{ex:z3}.

One more piece of circumstantial evidence is that the results of \cite{GRRTohoku} show that the braid group representations associated with a modular category only nominally depend on the finer structures such as the associativity constraints: for odd primes $p$, the images of the braid group representations for $p$-metaplectic modular categories are projectively equivalent.  Since the property $F$ conjecture depends only on the dimensions of objects, which are determined by fusion rules, it would perhaps not be so surprising if the fusion rules essentially determine the braid group images.  A related result in \cite{nikbraid} implies that the images of the braid group representations associated with different modular categories with the same underlying fusion category are either all finite or all infinite.

\section{Conclusions}\label{conclusions}
In this paper we have unified some explicit constructions of braid group representations that come from finite groups in a fairly direct way.
We have also provided strong evidence that twisted tensor products of group algebras simplify the analysis of gaugings of pointed modular categories.  In particular, the data describing $\A(A,\tau)$-YBOs, simply a function on $A$, is much simpler than the construction of the $R$-matrices of a gauged modular tensor category.  However, beyond the Gaussian case, the connection between the two braid group representations remains at the level of Bratteli diagrams and eigenvalues.    

It would be of great interest to formulate precise intertwining operators between braid group representations in centralizer algebras of gaugings of pointed modular tensor categories and those from $\A(A,\tau)$-YBOs.  This was accomplished with great difficulty in the Gaussian case \cite{RW}.  Ideally, we would like to find a uniform framework generalizing this construction to all gaugings of pointed modular tensor categories.

\section*{Appendix I: computations for $G=S_3$ and $\A_n(Q_8)$.}
In what follows we provide some details classifying solutions to the $\A(S_3,\tau)$ and $\A(Q_8)$-YBE.
\subsection*{Symmetric group $S_3$}
We let $u,v$ be the generators for $S_3$ with $u^2=v^3=1$ and $uvu=v^2$.  For example we could take $u=(1\;2)$ and $v=(1\;2\;3)$
By the theory above, we initialize with the following MAGMA code to find conditions on $a,b,c,d,e\in\C$ so that $r=1+au+bv+cv^2+duv+euv^2$ is an $\A(S_3,\tau)$-YBO.
\begin{verbatim}
    
F<w>:=CyclotomicField(4);
R<u1,v1,v2,u2,a,b,c,d,e>:=FreeAlgebra(F,9);
f:=function(x,y,z)
return x*y-z*y*x;
end function;
X:=[u1,v1,u2,v2,a,b,c,d,e];
B:=[u1^2-1,v1^3-1,u2^2-1,v2^3-1,u1*v1-v1^2*u1,u2*v2-v2^2*u2] cat
[f(u1,u2,-1),f(v1,v2,1),f(u1,v2,1),f(u2,v1,1)] cat
[f(a,x,1): x in X] cat 
[f(c,x,1): x in X] cat
[f(b,x,1): x in X] cat
[f(e,x,1): x in X] cat
[f(d,x,1): x in X];
I:=ideal<R|B>;

R1:= 1+a*u1+b*v1+c*v1^2+d*u1*v1+e*u1*v1^2;
R2:=1+a*u2+b*v2+c*v2^2+d*u2*v2+e*u2*v2^2;
NormalForm(R1*R2*R1-R2*R1*R2,I);

\end{verbatim}
The ideal of solutions is generated by the coefficients of the monomials in $u_i,v_j$.  We enforce invertibility of $r$ by assuming the determinant of the image of $r$ under the faithful $S_3$ representation on $\C^3$ is non-zero.  The output of the Gr\"obner basis is the following set of polynomials: 
$$\{c, b, e(a^2+d^2+e^2+1), 
ad+ae+de, a^3+a^2e+2ae^2+de^2+e^3+a+e, 
-a^2e+ae^2+d^3+2de^2+d\}$$
Notice that $c=b=0$, in all cases. If $e=0$ then $ad=0$, and $a^3+a=d^3+d=0$, which are degenerate solutions of the form $1+xu$ that can be obtained from $\Z_2$ (see \cite{FRW}).

If $e\neq 0$ we find that $e$ is a free parameter, and the following code shows that we may normalize to get $r^4=1$.  There is a 1-parameter family of solutions for $(a,d,e)$.  Moreover one sees that if we require a unitary solution each of $a,d,e$ should be pure imaginary, and consequently the equation $a^2+d^2+e^2+1=0$ implies that $(a/\im,d/\im,e/\im)$ is a point on the unit sphere.   Geometrically this is the intersection of the unit sphere with the surface given by $xy+xz+yz=0$.

\begin{verbatim}
F<w>:=CyclotomicField(4);
R<u1,v1,v2,u2,a,d,e>:=FreeAlgebra(F,7);
f:=function(x,y,z)
return x*y-z*y*x;
end function;
X:=[u1,v1,u2,v2,a,d,e];
B:=[u1^2-1,v1^3-1,u2^2-1,v2^3-1,u1*v1-v1^2*u1,u2*v2-v^2*u2] cat
[f(u1,u2,-1),f(v1,v2,1),f(u1,v2,1),f(u2,v1,1)] cat
[f(a,x,1): x in X] cat 
[f(e,x,1): x in X] cat
[f(d,x,1): x in X] cat [a^4+2*a^3*e+3*a^2*e^2+2*a*e^3+e^4+a^2+2*a*e+e^2, 
a^3+a^2*d+2*a^2*e+a*e^2+e^3+a+e, a*d+a*e+d*e, (a^2+d^2+e^2+1)];
I:=ideal<R|B>;

R1:= (1+a*u1+d*u1*v1+e*u1*v1^2)/(1+w);
R1i:=(1-(a*u1+d*u1*v1+e*u1*v1^2))/(1-w);
R2:=(1+a*u2+d*u2*v2+e*u2*v2^2)/(1+w);
NormalForm(R1*R2*R1-R2*R1*R2,I);
NormalForm(R1^4,I);
NormalForm(R1i*R1,I);
\end{verbatim}

\subsection*{Quaterionic Algebra $\A_n(Q_8)$}
For the case of the algebra $\A_n(Q_8)$ we use Magma to classify $\A(Q_8)$-YBOs.  The following is the final code, where the last polynomial relations are the coefficients obtained from an initial run of the normal form command on an initial run (i.e., without the last set of relations).  One finds that the non-trivial solutions for $(a,b,c)$ are all $\pm 1$, so that if we want unitary solutions, the inverse of {R1} is of the form given as {R1i} since $u^*=u^{-1}=-u$ etc.  We conclude that all unitary solutions are equivalent to the choice $(a,b,c)=(1,1,1)$.
\begin{verbatim}
F<w>:=CyclotomicField(12);
R<u1,v1,v2,u2,a,b,c>:=FreeAlgebra(F,7);
f:=function(x,y,z)
return x*y-z*y*x;
end function;
g:=function(a)
return a^2+1;
end function;
X:=[u1,v1,v2,u2,a,b,c];
Y:=[u1,v1,v2,u2];
B:=[g(x):x in Y] cat
[f(u1,v1,-1),f(u2,v2,-1)] cat
[f(u1,u2,1),f(v1,v2,1),f(u1,v2,-1),f(u2,v1,-1)]  
cat [f(a,x,1): x in X]
cat [f(b,x,1): x in X]
cat [f(c,x,1): x in X]
cat [-2*b*a^2+ 2*c^2*b,2*b^2*a-2*c^2*a,
c*a^2+c*b^2-c^3-c,4*c*b^2- 2*c^3-2*c,
a^3-2*c^2*a+a,b^3-2*c^2*b+b];
I:=ideal<R|B>;

R1:=(1+a*u1+b*v1+c*u1*v1)/2;
R2:=(1+a*u2+b*v2+c*u2*v2)/2;
R1i:=1/2-(a*u1+b*v1+c*u1*v1)/2;
NormalForm(R1*R2*R1-R2*R1*R2,I);
NormalForm(R1*R1i,I);


\end{verbatim}

\bibliographystyle{plain}
\bibliography{Reference.bib}

\end{document}